\title[Space-time analyticity for inviscid fluid dynamic equations]{Space and time analyticity for inviscid equations of fluid dynamics}
\author{Animikh Biswas}
\address{Department of Mathematics \& Statistics, 
University of Maryland, Baltimore County,
1000 Hilltop Circle, Baltimore, Maryland - 21250, USA.}
\email{abiswas@umbc.edu}
\author{Joshua Hudson$^\dagger$}
\address{Department of Mathematics \& Statistics, 
University of Maryland, Baltimore County,
1000 Hilltop Circle, Baltimore, Maryland - 21250, USA.}
\email{JoshuaHudson@umbc.edu}
\thanks{$^\dagger$\, Corresponding author.}
\subjclass{Primary 35Q35; Secondary 35Q30; 76D09}
 \keywords{Euler Equations; Analyticity; Gevrey classes; Inviscid Equations.}
\newtheorem{theorem}{Theorem}[section]
\newtheorem{prop}[theorem]{Proposition}
\newtheorem{cor}[theorem]{Corollary}
\newtheorem{rem}{Remark}
\newcommand{\p}{\mathbb P}
\newcommand{\comments}[1]{}
\renewcommand{\phi}{\varphi}
\newcommand{\R}{\mathbb R}
\newcommand{\C}{\mathbb C}
\newcommand{\g}{\gamma}
\newcommand{\G}{\Gamma}
\newcommand{\nn}{\nonumber}
\newcommand{\ka}{\kappa }
\newcommand{\Z}{\mathbb Z}
\newcommand{\N}{\mathbb N}
\newcommand{\cal}{\mathcal }
\newcommand{\ra}{\rightarrow}
\newcommand{\hh}{\mathbb H}
\newcommand{\sob}{H^2(\Omega)}
\newcommand{\dt}{\frac{d}{dt}}
\newcommand{\ds}{\frac{d}{ds}}
\renewcommand{\l}{\langle}
\renewcommand{\r}{\rangle}
\newcommand{\be}{\begin{equation}}
\newcommand{\ee}{\end{equation}}
\newcommand{\bes}{\begin{equation*}}
\newcommand{\ees}{\end{equation*}}
\newcommand{\ba}{\begin{eqnarray}}
\newcommand{\ea}{\end{eqnarray}}
\newcommand{\bas}{\begin{eqnarray*}}
\newcommand{\eas}{\end{eqnarray*}}
\newcommand{\wh}{\widehat}
\newcommand{\cW}{\cal W}
\newcommand{\cL}{{\cal L}}
\newcommand{\cD}{{\cal D}}
\newcommand{\length}{l}
\newcommand{\Cwien}[1][s]{c_{#1}}
\newcommand{\Span}{\text{span}}
\newcommand{\bu}{\mathbf u}
\newcommand{\bb}{\mathbf b}
\newcommand{\bv}{\mathbf v}
\newcommand{\bw}{\mathbf w}
\newcommand{\wu}{\wh{\bu}}
\newcommand{\wv}{\wh{\bv}}
\newcommand{\ww}{\wh{\bw}}
\def\e{{\mathbf e}}
\def\h{{\mathbf h}}
\def\j{{\mathbf j}}
\def\k{{\mathbf k}}
\def\x{{\mathbf x}}
\def\0{{\mathbf 0}}
\def\z{{\mathbf z}}
\def\tF{\widetilde{F}}
\newcommand{\bn}{\boldsymbol{|\!\!|}}
\newcommand{\tZ}{\widetilde{\mathbb Z}}
\def\c1{(1+\g )^{\G_0 \alpha}}
\def\intav#1{\mathchoice
          {\mathop{\vrule width 6pt height 3 pt depth -2.5pt
                  \kern -9pt \intop}\nolimits_{\kern -6pt#1}}%
          {\mathop{\vrule width 5pt height 3 pt depth -2.6pt
                  \kern -6pt \intop}\nolimits_{#1}}%
          {\mathop{\vrule width 5pt height 3 pt depth -2.6pt
                  \kern -6pt \intop}\nolimits_{#1}}%
          {\mathop{\vrule width 5pt height 3 pt depth -2.6pt
                  \kern -6pt \intop}\nolimits_{#1}}}
\newcommand{\charfn}[1]{{\raisebox{1.2pt}{\mbox{$\chi
_{\kern-1pt\lower3pt\hbox{{$\scriptstyle{#1}$}}}$}}}}
\begin{document}
\begin{abstract}
We show that  solutions to a large class of inviscid equations, in Eulerian variables, 
extend as  holomorphic functions of time, with values in a Gevrey class (thus space-analytic), and are solutions of complexified versions of the said equations. The class of equations we consider includes those of fluid dynamics such as the Euler, surface quasi-geostrophic, Boussinesq and magnetohydrodynamic equations, as well as other equations with  analytic nonlinearities. The initial data are assumed to belong to a {\it Gevrey class}, i.e., analytic in the space variable.
Our technique follows that of the seminal work of  Foias and Temam (1989), where they introduced the so-called Gevrey class technique for the Navier-Stokes equations to show that the solutions of the Navier-Stokes equations extend as holomorphic functions of time, in a complex neighborhood of $(0,T)$, with values in a Gevrey class of functions (in the space variable). We show a similar result for a wide class of inviscid models, while obtaining an {\it explicit estimate of the domain of analyticity}.
\end{abstract}
\maketitle
\section{Introduction}
It is well-known that solutions to a large class of dissipative  equations 
are analytic in space and time \cite{BaeBi, BiSw, Bradshaw, feti, ft1, ft, gk1, ma, ot}.
In fluid dynamics,  space analyticity radius has a physical interpretation. It denotes a length scale below which the viscous effects dominate and the Fourier spectrum decays exponentially, while above it, the inertial effects dominate \cite{doti}. 
This fact concerning exponential decay can be used to show that the Galerkin approximation converges exponentially to the exact solution \cite{dot}.
Other applications of analyticity radius occur in establishing sharp temporal decay rates of solutions in higher Sobolev norms \cite{Bi, ot}, establishing geometric regularity criteria for solutions, and in measuring the spatial complexity of fluid flows \cite{gr, ku}. Likewise, time analyticity also has several  important applications including establishing backward uniqueness of trajectories \cite{cf}, parameterizing turbulent flows by finitely many space-time points \cite{kr} and numerical determination of the attractor \cite{fjl}.

 Space and time analyticity of inviscid equations, particularly the Euler equations, has received considerable attention recently, as well as in the past. Space analyticity for Euler, in the Eulerian variables,  was considered for instance in \cite{bb, bbz, kv1, kv2, lo}, while in \cite{am, delort, lebail} real analyticity in the time (and space) variable is established using harmonic analysis tools. In the above mentioned works, the initial data are taken to be analytic in the space variable. By contrast, in a recent work \cite{cvw}, it is shown that the Lagrangian trajectories are  real analytic (in time), even though the initial velocity fields are slightly more regular than Lipschitz in the  space variable. Similar results also appear elsewhere; see for instance in \cite{serfati, shnirel,zf} and the references therein. Additionally, the contrast between the analytic properties in the Eulerian and Lagrangian variables has been considered recently in \cite{cvk}.
 
In this paper, we show that solutions of the Euler, as well as the inviscid versions of the SQG, Boussinesq, MHD, and similar equations with analytic nonlinearities, with analytic initial data, extend as solutions of the complexified versions of the equations, as holomorphic functions of time, with values in a suitable Gevrey class of functions in the space variable. Since belonging to a Gevrey class is equivalent to a function being (complex) analytic, this immediately establishes that the solutions  extend as  holomorphic functions of both space and time. In contrast to, for instance, the results in \cite{am, delort}, we not only obtain holomorphic extensions (as opposed to real in time analyticity in \cite{am, delort}) but also obtain {\it explicit estimates on the domain of (time) analyticity,} while in \cite{am, delort}, the region is given implicitly in terms of the {\it flow map} generated by the solutions. Our approach follows \cite{ft}, in which the desired results are obtained for the Navier-Stokes equations. We also make use of the ideas introduced in \cite{lo} and \cite{kv1}.

It should be noted that unlike their ``real" counterparts, the complexified inviscid models are not known to conserve ``energy", which is due to the fact that the complexified nonlinear terms do not in general possess cancellation properties akin to their real counterparts. Yet, as in \cite{lo}, the mild dissipation generated due to working in a Gevrey class setting is enough for local existence for the complexified versions of these inviscid models.

The paper is organized as follows. In section 2, we introduce the notation and discuss requisite results. In sections 5-7, we respectively consider the Euler equations, the inviscid surface quasi-geostrophic equations, the inviscid Boussinesq equations, the inviscid magnetohydrdynamic equations and an equation with an anlytic nonlinearity.

\section{Preliminaries}  \label{prelims}
\subsection{Notation and Setting for Incompressible Hydrodynamics.}
In this article, we will consider several evolutionary (incompressible) fluid dynamic models including the incompressible Euler equations, the surface quasi-geostrophic equation (SQG), the Boussinesq equations and the magnetohydrodynamic equations (MHD). In all the cases, these equations will be considered on a spatial domain  $\Omega=[0,\length]^d, d\in \N$, 
 and supplemented with the space periodic boundary condition (with spatial period $\length$), i.e., the phase space will comprise of scalar-valued or vector-valued functions, which are periodic in the space variable $\x$ with period $\length$ in all spatial directions.
 For notational simplicity, we will assume 
 \[
\length=2\pi,\ \mbox{and therefore,}\  
\ka_0:=\frac{2\pi}{\length}=1. 
\]
The inner product on $L^2(\Omega):=\left\{u:\Omega\to\R ,\  \int_\Omega |u(\x)|^2 d\x < \infty \right\}$
is denoted $\l \cdot\, , \cdot \r$ and the corresponding $L^2-$norm
will be  denoted as $\|\cdot \|$. As usual, the Euclidean length of a vector in $\R^d (\, \mbox{or}\, \C^d)$  is denoted by $|\cdot|$.

For a  function $\bu:\Omega \ra \R^d (\, \mbox{or}\, \C^d)$, 
its Fourier coefficients are defined by
\[
\widehat{\bu}(\k)= \frac{1}{(2\pi)^d} \int_{\Omega} \bu(\x) e^{-\imath \kappa_0 \k \cdot \x} d\x\  (\k \in \Z^d).\
\]
Then by the Parseval identity,
\[
\|\bu\|^2 = (2\pi)^{d} \sum_{\k\in \Z^d} |\wu(\k)|^2.
\]

In all the models we consider, if the space average of the initial data is zero, then the space average remains zero for all future times, under the evolution. Therefore, we will always make the additional assumption that the elements of the phase space have space average zero (over the spatial domain $\Omega$). In terms of the Fourier coefficients, this amounts to the condition $\wu({\mathbf 0})={\mathbf 0}$ (which is then preserved under the evolution). 

We will denote
\[
\dot{L}^2(\Omega)=\left\{u \in L^2(\Omega): \int_\Omega u(\x)\, d\x=0, \ \mbox{or equivalently,}\ 
\wh{u(\k)}=0\right\}.
\]
In the case of incompressible fluid dynamics, the phase space $H$ is given by 
\[
H=\left\{\bu \in (\dot{L}^2(\Omega))^d,\ \nabla \cdot \bu = \0\right\},
\]
where the derivative is understood in the distributional sense. Using Fourier coefficients, the space $H$ can alternatively be characterized by
\[
H=\{\bu \in (L^2(\Omega))^d,\ \wu(\0)=\0,\ \k \cdot \widehat{\bu}(\k)=\0,\ \wu(-\k)= \overline{\widehat{\bu}(\k)}\}.
\]

Note that the space $(-\Delta)(H \cap \hh^2) \subset H$. The Stoke's operator, $A$, with domain ${\cal D}(A)=H \cap \hh^2$, is defined to be $A=(-\Delta)|_{\cal D(A)}$. The Stoke's operator $A$ is positive and self adjoint with a compact inverse. It therefore admits a unique,  positive square root, denoted $A^{1/2}$, with domain $V$, where
 the space $V$ is characterized by
\[
V=\{\bu \in H: \|A^{1/2}\bu\|^2 = (2\pi)^d\sum_{\k \in \tZ^d}|\k|^2|\wu(\k)|^2 < \infty  \},
\]
where $\tZ^d=\Z^d \setminus \{\0\}$. The spectrum of $A$ comprises of 
eigenvalues $0< 1=\lambda_1 \le \lambda_2 \le \cdots $, where, for each $i$,  $\lambda_i \in \{|\k|^2: \k \in \tZ^d\}$. The set of eigenvectors $\{\e_i\}_{i=1}^\infty$, where $\e_i$ is an eigenvector corresponding to the eigenvalue $\lambda_i$, form an orthonormal basis of $H$. We will denote $H_N =\Span\{\e_1, \cdots ,\e_n\}$.

It is easy to see that the dual $V'$ of $V$ is given by 
\[
V'=\{ \bv \in {\mathscr D}: \wv (\k)=\overline{\wv(-\k)},\ \wv(\0)=\0,\ \sum_{\k \in \tZ^d} 
\frac{|\wv(\k)|^2}{|\k|^2} < \infty \},
\]
where ${\mathscr D}$ denotes the space of distributions. The duality bracket between $V$ and $V'$ is given by
\[
{}_{V}\l \bu , \bv \r_{V'}= \sum_{\k \in \tZ^d} \wu(\k)\overline{\wv(\k)},\quad \bu \in V, \bv \in V'.
\]

It should be noted that 
\[
\|A^{s/2}\bu\|^2 =(2\pi)^d\sum_{\k \in \tZ^d}|\k|^{2s}|\wu(\k)|^2\ \mbox{ for}\ 
\bu \in {\cal D}(A^{s/2})=(\dot{L}^2(\Omega))^d \cap \hh^s.
\]
\comments{where $A=-\Delta $ is the Laplacian, acting on  functions in $\dot{L}^2(\Omega) \cap \h^2(\Omega)$, where $\h^s(\Omega)$ denotes the usual ($L^2-$based) Sobolev spaces.} 
It is well known that $\|A^{s/2}\cdot\|\sim \|\cdot \|_{\hh^s}$
on ${\cal D}(A^{s/2})$, and the Poincar\'{e} inequality holds, i.e., 
\be \label{poincare}
\|A^{s/2}\bu\| \sim \|\bu\|_{\hh^s}\ \mbox{and}\  \|A^{s/2}\bu\| \ge \ka_0^s \|\bu\| \quad 
(\bu \in (\dot{L}^2(\Omega))^d \cap (\hh^s(\Omega))^d).
\ee
Using the Sobolev and interpolation inequalities, we also have
\be \label{Sob}
\|\bu\|_{L^p} \lesssim \|A^{s/2}\bu\| \le \|\bu\|^{1-s}\|A^{1/2}\bu\|^s,\ p = \frac{2d}{d-2s},\ 
1 \le d \le 4,
\ee
where $a\lesssim b$ has the same meaning as when we write $a\le Cb$, where $C$ is used to denote a general constant, which may only depend on $\length$ or $d$ and no other parameter in the problem. 

We will find it useful to define the so-called Wiener algebra, 
\be  \label{W}
{\cal W}:=\{\bu \in H: \|\bu\|_{\cW}:=\sum_\k |\wu(\k)| < \infty \}.
\ee
Clearly, from the expression of $\bu$ in terms of its Fourier series,
 $\bu (\x)= \sum_{\k \in \tZ^d} \wu(\k)e^{\imath \k \cdot \x}$, it immediately follows that
 \[
 \|\bu\|_{L^\infty} \le \|\bu\|_{\cW}.
 \]
 In addition, we have the elementary inequality
 \be  \label{Sob1}
 \|\bu\|_{L^\infty} \le \|\bu\|_{\cW} \le 
 \tfrac{2\pi^{d-1}}{\length^{d}}\tfrac{2s-d+1}{2s-d}
 \|A^{s/2}\bu\|, \ s > \frac d2.
 \ee
 We will be using \eqref{Sob1} with $s=r-\frac12$ for a number $r>\frac{d}{2}$, and so for readability we will define $\Cwien[r] = \tfrac{2\pi^{d-1}}{\length^{d}}\tfrac{2(r-\frac12)-d+1}{2(r-\frac12)-d}=\tfrac{1}{\pi2^{d-1}}\tfrac{2r-d}{2r-1-d}$.

 \comments{
 $\Gamma$ is the gamma function, and in the cases of d=2,3, we have:
 \be
 \tfrac{\pi^{d/2}d}{\Gamma(1+d/2)}=
 \begin{cases}
	 2\pi, \quad \text{ if } d=2\\
	 4\pi, \quad \text{ if } d=3
 \end{cases}
 \ee
 }

 Often, in equations such as the SQG or the Boussinesq, we will be interested in the evolution of a scalar variable $\eta$. In these cases, the phase space will be 
 $\dot{L}^2(\Omega)$ and $A=(-\Delta) $ acting on ${\cal D}(A)=\dot{L}^2(\Omega) \cap \sob$. We will not make any notational distinction between the operator $A$ in these cases and the Stoke's operator, which will be understood from context. The inequalities \eqref{poincare}, \eqref{Sob} and \eqref{Sob1} are still valid here.
 
 \subsection{Gevrey Classes}\label{Gevrey_Classes}
 Let $0< \beta <\infty $ and fix $r>0$. We denote the Gevrey norm by
\bes
\|\mathbf f\|_{\beta}=\|A^{r/2}e^{\beta A^{1/2}} \mathbf f\|.
\ees
The Gevrey norm is characterized by the decay rate of higher order derivatives, namely, 
if $\|{\mathbf f}\|_{\beta} < \infty $ for some $\beta>0$, then we have the higher derivative estimates 
\begin{gather}  \label{gevreydecay}
\|{\mathbf f}\|_{\hh^{r+n}} \le \left(\frac{n!}{\beta^n}\right)\|\mathbf f\|_{\beta}\ 
\mbox{where}\  n \, \in \, {\mathbb N} .
\end{gather}
In particular,   $\mathbf f$ in (\ref{gevreydecay}) is  analytic with (uniform) analyticity radius 
$\beta$.
For  the above mentioned facts including  (\ref{gevreydecay}), see Theorem 4 in \cite{lo} and Theorem 5 in \cite{ot}.

 \subsection{Complexification.} In order to extend the solutions of the equations to complex times, we need to complexify the associated phase spaces and operators. Accordingly, let $\cL$ be an arbitrary, real, separable Hilbert space with (real) inner-product $\l\cdot\, , \cdot\r$. The complexified Hilbert space $\cL_\C$ and the associated inner-product is given by
 \[
 \cL_\C=\{\bu=\bu_1+\imath\bu_2: \bu_1,\bu_2 \in \cL\},
 \]
 and for $\bu, \bv \in \cL_\C$ with $\bu=\bu_1+\imath \bu_2, \bv=\bv_1+\imath \bv_2$, the complex inner-product is given by
 \[
 \l\bu,\bv\r_\C=\l\bu_1,\bv_1\r+\l\bu_2,\bv_2\r+\imath[\l\bu_2,\bv_1\r-\l\bu_1,\bv_2\r].
 \]
 Observe that the complex inner-product $\l\cdot \, , \cdot\r_\C$ is linear in the first argument while it is conjugate linear in the second argument.
 If $A$ is a linear operator on $\cL$ with domain ${\cal D}(A)$, we extend it to a linear operator $A_\C$ with domain $\cD(A_\C)=\cD(A)+\imath \cD(A)$ by
 \[
 A_\C(\bu_1+\imath \bu_2)=A\bu_1+ \imath A\bu_2, \bu_1,\bu_2 \in \cD(A).
 \]
 Henceforth, we will drop the subscript notation from the complexified operators and inner-products and denote $A_\C$ and $\l\cdot\, , \cdot\r_C$ respectively as $A$ and 
 $\l\cdot\, , \cdot\r$, but will retain the subscript in the notation of the corresponding complexified Hilbert spaces.
 
\section{Incompressible Euler Equations}  \label{sec:Euler}

The incompressible Euler equations, on a spatial domain
$\Omega=[0,2\pi]^d,\ d=2\, \text{or}\, 3$, are given by
\begin{subequations}  \label{euler}
\begin{alignat}{2}
\partial_t \bu + (\bu \cdot \nabla) \bu + \nabla p &=0, 
\quad&& \text{in }\Omega\times \R_+,\\
\nabla \cdot \bu &=0,
\quad&& \text{in }\Omega\times \R_+,\\
\bu(\x,0)&=\bu_0(\x), \quad&& \text{in }\Omega,
\end{alignat}
\end{subequations}
where $\bu=\bu(\x,t)$ denotes the fluid velocity at a location $\x \in \Omega$ and time 
$t\in\R_+:=[0,\infty)$ and $p=p(\x,t)$ is the fluid pressure. Since its introduction in \cite{euler},
it has been the subject of intense research both in analysis and mathematical physics; see \cite{BaTi, majB} for a survey of recent results on \ref{euler}. 
As discussed in section \ref{prelims}, we supplement \eqref{euler}
with the space periodic boundary condition with space period $2\pi$, i.e., the functions
$\bu$ and $p$ are periodic with period $2\pi$ in all spatial directions.

We will also denote
\be  \label{nonlinterm}
B(\bu,\bv)=\p \left(\bu\cdot \nabla \bv\right)= \p\, \nabla \cdot (\bu \otimes \bv),
\ee
where $\p:(\dot{L}^2(\Omega))^d \ra H$ is the Leray-Helmholtz orthogonal projection 
operator onto the closed subspace $H$ of $(\dot{L^2}(\Omega))^d$. From \eqref{Sob}, it readily follows that if $\bu, \bv \in V$, then $\|\bu \otimes \bv\| < \infty$ and consequently,
$B(\bu,\bv) \in V'$.

The functional form of the incompressible Euler equations  is given by 
\be \label{eulerfuncform}
\dt \bu +  B(\bu,\bu)=0.
\ee
We will consider the complexified Euler equation given by
\be \label{cxeuler}
 \frac{d\bu}{d \zeta} + B_\C(\bu,\bu)=0, \bu(0)=\bu_0, 
 \ee
 where, for $\bu=\bu_1+\imath \bu_2, \bv=\bv_1+\imath \bv_2 \in H_\C$, the complexified nonlinear term is given by
 \[
 B_\C(\bu,\bv):= B(\bu_1,\bv_1)-B(\bu_2,\bv_2) + \imath [B(\bu_1,\bv_2)+B(\bu_2,\bv_1)].
 \]
 As before, we will drop the subscript and write $B=B_\C$.

 In the following, let $r > \frac {d+1}{2}$ be fixed, and we will consider the corresponding Gevrey norm, $\|\cdot\|_\beta$, as defined in Section~\ref{Gevrey_Classes}.

\comments{
 \[
 \|\bu\|_{\beta}^2:=\|A^{r/2}e^{\beta A^{1/2}}\bu\|^2
 = \sum_{\k \in \tZ^d} |\k|^{2r}e^{2\beta |\k|}|\wu(\k)|^2.
 \]
 }

 \begin{theorem}  \label{thm:euler}
 Let $\beta_0>0$ be fixed, and let $\bu_0$ be such that $\|\bu_0\|_{\beta_0} < \infty $.  The complexified Euler equation \eqref{cxeuler} admits a unique solution in the region 
 \be  \label{cxregion}
 {\cal R}= \left\{\zeta=se^{i\theta}: 
 \theta \in [0,2\pi), 
 0 < s < \frac{C\beta_0}{2^r\Cwien[r]\|\bu_0\|_{\beta_0}}\right\}.
 \ee
 \end{theorem}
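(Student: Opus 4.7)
The strategy adapts the Gevrey energy method of Foias--Temam \cite{ft} to the inviscid complex-time setting, combining it with the Wiener-algebra based nonlinear estimate of Levermore--Oliver \cite{lo} used for the Euler equations in \cite{kv1}. For each fixed $\theta \in [0,2\pi)$, I first restrict attention to the ray $\zeta = s e^{i\theta}$, along which \eqref{cxeuler} reduces to the ODE
\begin{equation*}
\frac{d\bu}{ds} + e^{i\theta} B(\bu,\bu) = 0, \qquad \bu(0) = \bu_0,
\end{equation*}
so the task becomes the construction, uniformly in $\theta$, of a solution on the interval $0 \le s < s_\star$ with $s_\star = C\beta_0 / (2^r \Cwien[r] \|\bu_0\|_{\beta_0})$.

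I approximate by Galerkin truncation: writing $P_N$ for the orthogonal projection onto $H_N$, I solve
\begin{equation*}
\frac{d\bu_N}{ds} + e^{i\theta} P_N B(\bu_N,\bu_N) = 0, \qquad \bu_N(0) = P_N \bu_0,
\end{equation*}
which is locally well-posed by Picard since the nonlinearity is polynomial on $H_N$. To obtain uniform-in-$N$ control, I introduce a moving radius of analyticity $\beta(s) = \beta_0 - \lambda s$ with $\lambda > 0$ to be chosen, and set $\Psi_N(s) := \|\bu_N(s)\|_{\beta(s)}^2$. Differentiating and using the commutativity and self-adjointness of $A^{r/2}$ and $e^{\beta A^{1/2}}$ yields
\begin{equation*}
\tfrac{1}{2} \Psi_N'(s) = -\lambda \|A^{(2r+1)/4} e^{\beta(s) A^{1/2}} \bu_N\|^2 - \mathrm{Re}\, e^{i\theta} \bigl\langle A^{r/2} e^{\beta A^{1/2}} B(\bu_N,\bu_N),\, A^{r/2} e^{\beta A^{1/2}} \bu_N \bigr\rangle,
\end{equation*}
in which the first term is the only available ``dissipation,'' produced by the shrinking of $\beta(s)$.

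The key step is a Gevrey--Wiener trilinear estimate
\begin{equation*}
\Bigl|\bigl\langle A^{r/2} e^{\beta A^{1/2}} B(\bu,\bu),\, A^{r/2} e^{\beta A^{1/2}} \bu\bigr\rangle\Bigr| \le 2^r \Cwien[r]\, \|\bu\|_{\beta}^{2}\, \|A^{(2r+1)/4} e^{\beta A^{1/2}} \bu\|,
\end{equation*}
which I would prove by writing out $B(\bu,\bu)$ in Fourier modes, applying the subadditivity $|\j+\k|^{r} \le 2^{r-1}(|\j|^{r} + |\k|^{r})$ and $e^{\beta |\j+\k|} \le e^{\beta |\j|}\, e^{\beta |\k|}$ to split the exponential weight, and bounding the resulting low-regularity factor by the Wiener-algebra embedding \eqref{Sob1}. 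Young's inequality then absorbs the last factor into the $\lambda$-dissipation to yield $\Psi_N'(s) \le (2^r \Cwien[r])^2\, \Psi_N(s)^{2}/(4\lambda)$. Choosing $\lambda$ proportional to $\|\bu_0\|_{\beta_0}$ and integrating, one obtains $\Psi_N(s) \le 2\|\bu_0\|_{\beta_0}^{2}$ for $0 \le s \le s_\star$, with $\beta(s_\star) > 0$, uniformly in $N$ and $\theta$.

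The uniform Gevrey bound together with the equation gives equicontinuity of $\{\bu_N\}$ in a weaker Gevrey--Sobolev norm, so by Aubin--Lions a subsequence converges to a limit $\bu$ solving the ODE on $[0,s_\star)$; uniqueness follows from a standard difference estimate in a slightly lower Gevrey norm, using that $B$ is quadratic. Since each $\bu_N$ is holomorphic in $\zeta$ as the solution of a complex-analytic ODE on $H_N$, uniform-on-compacts convergence in ${\cal R}$ combined with uniqueness on each ray produces a holomorphic $\bu$ on all of ${\cal R}$. \emph{Main obstacle.} In the Navier--Stokes case the viscous term $\nu \|A^{(r+1)/2} e^{\beta A^{1/2}} \bu\|^{2}$ furnishes the half-derivative gain needed to close the trilinear estimate; here only the mild dissipation from $\beta'(s)$ is available, and complexification destroys the cancellation $\langle B(\bu,\bu),\bu\rangle = 0$ one uses in the real Euler case, so one cannot avoid bounding the full trilinear form and must exploit the Wiener-algebra embedding \eqref{Sob1} instead. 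Matching the resulting quadratic in $\Psi_N$ against the $\lambda$-dissipation is what fixes the explicit form of $s_\star$ in \eqref{cxregion}.
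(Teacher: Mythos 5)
Your overall architecture (Galerkin truncation, reduction to rays $\zeta=se^{i\theta}$, a shrinking Gevrey radius $\beta(s)=\beta_0-\lambda s$ whose decay supplies the only dissipation, a Wiener-algebra bound on the trilinear term, and a Montel/compactness passage to the limit) is exactly that of the paper. However, your key trilinear estimate is wrong, and the error is not cosmetic: you claim
\[
\bigl|\bigl\langle A^{r/2}e^{\beta A^{1/2}}B(\bu,\bu),\,A^{r/2}e^{\beta A^{1/2}}\bu\bigr\rangle\bigr|\;\lesssim\;2^{r}\Cwien[r]\,\|\bu\|_{\beta}^{2}\,\|A^{1/4}\bu\|_{\beta},
\]
with the half-derivative excess on only \emph{one} factor, whereas the correct statement (Proposition \ref{prop:nocancellation} in the paper) places it on \emph{two} factors: $\lesssim 2^{r}\Cwien[r]\,\|\bu\|_{\beta}\,\|A^{1/4}\bu\|_{\beta}^{2}$. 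Your version is false. In the term of the splitting $|\k|^{r}\le 2^{r-1}(|\h|^{r}+|\j|^{r})$ where the $|\j|^{r}$ piece is kept, the weight $|\h|^{0}|\j|^{r+1}|\k|^{r}$ forces either a full extra derivative on one $\ell^{2}$ factor or, after the redistribution $|\j|^{1/2}\lesssim|\h|^{1/2}|\k|^{1/2}$, half a derivative on each of two $\ell^{2}$ factors; there is no arrangement giving half a derivative on a single factor. Concretely, take $\wu$ supported on $\h_0,\j_0,\k_0=\h_0+\j_0$ with $|\h_0|=1$, $|\j_0|\sim|\k_0|\sim N$ and amplitudes chosen so that $|\wu(\h_0)|=N^{r}e^{\beta N}|\wu(\j_0)|$: then the pairing is of order $Na^{3}$ while your right-hand side is of order $N^{1/2}a^{3}$, so the ratio blows up as $N\to\infty$.

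This matters because your closing mechanism depends on having only one power of the dissipation norm: you absorb it by Young's inequality and reduce to a Riccati inequality $\Psi_N'\lesssim\Psi_N^{2}/\lambda$. With the correct estimate the right-hand side is $2^{r}\Cwien[r]\|\bu\|_{\beta(s)}\|A^{1/4}\bu\|_{\beta(s)}^{2}$, which cannot be absorbed by Young; instead one must use a bootstrap: choose $\delta=C2^{r}\Cwien[r]\|\bu_0\|_{\beta_0}$ so that the entire nonlinear term is dominated by $\delta\,\|A^{1/4}\bu\|_{\beta(s)}^{2}$ as long as $\|\bu\|_{\beta(s)}\le\|\bu_0\|_{\beta_0}$, which then forces $\|\bu\|_{\beta(s)}$ to be non-increasing and hence self-propagates; the existence time is then limited only by $\beta_0-\delta s>0$, giving precisely \eqref{cxregion}. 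Note also that even granting your (false) estimate, balancing $s_\star\le\beta_0/\lambda$ against $s_\star\sim\lambda/\bigl((2^{r}\Cwien[r])^{2}\|\bu_0\|_{\beta_0}^{2}\bigr)$ yields $s_\star\sim\sqrt{\beta_0}\,/(2^{r}\Cwien[r]\|\bu_0\|_{\beta_0})$ rather than the linear-in-$\beta_0$ region of the theorem. So you need to replace both the trilinear estimate and the absorption step; the rest of your outline is sound.
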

 For the Euler equations in the real setting, it is well known  that if the Beale-Kato-Majda condition \cite{bkm}, $\int_0^T \|\nabla \times \bu \|_{L^\infty} < \infty $, is satisfied, and there exists $\beta_0$ such that $\|A^{r/2}e^{\beta_0A^{1/2}}\bu_0\|<\infty$, then there exists a continuous function 
 $\beta(t)>0$ on $[0,T]$ such that \cite{bb, kv1, kv2, lo}
 \[
  \sup_{[0,T]}\|A^{r/2}e^{\beta(t) A^{1/2}}\bu(t)\| < \infty .
  \]
 In this case, $\bu$ extends as a holomorphic function satisfying \eqref{cxeuler} in a neighborhood of $(0,\infty)$ in 
 $\C$. More precisely we have the following:

 \begin{cor}
 If there exists a continuous function $\beta(\cdot)>0$ on $[0,T]$ such that 
 \be  \label{finite}
 M:= \sup\limits_{t \in [0,T]} \|\bu(t)\|_{\beta(t)} < \infty , 
 \ee
 where $\bu$ is a solution of \eqref{eulerfuncform}, then $\bu(\cdot)$ extends as a holomorphic function in a complex 
 neighborhood of $(0,T)$, satisfying \eqref{cxeuler}.
  \end{cor}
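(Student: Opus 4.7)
The plan is to use Theorem~\ref{thm:euler} as a \emph{local} existence/uniqueness result for the complexified equation \eqref{cxeuler} and apply it at each real time in $[0,T]$, then glue together the resulting local holomorphic extensions via uniqueness.

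First, because $[0,T]$ is compact and $\beta(\cdot)$ is continuous and strictly positive, one has $\beta_{\min}:=\min_{t\in[0,T]}\beta(t)>0$. Combined with the hypothesis \eqref{finite}, this gives a uniform lower bound on the Gevrey-controlled data: $\|\bu(t_0)\|_{\beta(t_0)}\le M$ for every $t_0\in[0,T]$. Now fix any $t_0\in[0,T]$ and apply Theorem~\ref{thm:euler} with initial datum $\bu(t_0)$ and with $\beta_0 = \beta(t_0) \ge \beta_{\min}$. This yields a unique holomorphic solution $\bu^{(t_0)}(\zeta)$ of \eqref{cxeuler} on a disk centered at $t_0$ of radius at least
\[
\rho \; := \; \frac{C\,\beta_{\min}}{2^r\,\Cwien[r]\,M} \; > \; 0,
\]
where the key point is that $\rho$ is independent of $t_0$.

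Second, I would identify $\bu^{(t_0)}$ on the real axis with the original real solution $\bu$. For $t\in[t_0,t_0+\rho)\cap[0,T]$, the real function $t\mapsto \bu(t)$ is itself a (real-valued, hence complex-valued) solution of \eqref{cxeuler} with initial value $\bu(t_0)$; by the uniqueness clause of Theorem~\ref{thm:euler} it must coincide with $\bu^{(t_0)}$ there. Consequently, whenever two centers $t_0,t_1\in[0,T]$ satisfy $|t_0-t_1|<\rho$, the two holomorphic extensions $\bu^{(t_0)}$ and $\bu^{(t_1)}$ agree on the real subinterval contained in the intersection of their disks, and the identity theorem for Banach-space-valued holomorphic functions forces them to agree on the whole connected intersection. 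Gluing these local pieces yields a well-defined holomorphic function $\bu:\mathcal{N}\to H_\C$ on the complex neighborhood
\[
\mathcal{N} \; := \; \{\zeta\in\C \,:\, \dist(\zeta,(0,T))<\rho\}
\]
of $(0,T)$, which satisfies \eqref{cxeuler} everywhere on $\mathcal{N}$ and reduces to the given real solution on $(0,T)$.

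The only real obstacle is ensuring the uniform radius $\rho$ (handled by the compactness/continuity argument above) and justifying the patching; the latter is essentially automatic once one invokes uniqueness at the complex level from Theorem~\ref{thm:euler} together with the identity theorem. No new PDE estimates are needed beyond those already encoded in Theorem~\ref{thm:euler}, so the corollary reduces to a short compactness-and-uniqueness argument.
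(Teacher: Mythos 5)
Your argument is essentially the same as the paper's: extract a uniform lower bound $\beta_0=\inf_{[0,T]}\beta(t)>0$, use \eqref{finite} to get a uniform radius from Theorem~\ref{thm:euler}, and cover $(0,T)$ by reapplying the theorem at a chain of real base points. The only difference is that you make explicit the gluing step (uniqueness plus the identity theorem for the overlapping extensions), which the paper leaves implicit; that is a correct and welcome elaboration, not a different approach.
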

 \begin{proof}
	 Let $\beta_0 = \inf_{t \in [0,T]}\beta(t) > 0$. Then by Theorem~\ref{thm:euler}, $\bu$ extends as a holomorphic function in a complex neighborhood of $(0,\varepsilon)$, where $\varepsilon = \frac{C\beta_0}{2^r\Cwien[r]M}$. The proof follows by reapplying Theorem~\ref{thm:euler} with $\bu_0 = \bu(t_0)$, for each $t_0\in\{\frac{\varepsilon}{2},\frac{2\varepsilon}{2},\frac{3\varepsilon}{2},\dots\}\cap[0,T]$.

 \end{proof}

 \comments{
  \begin{proof}
  Let $\beta_0 = \inf_{t \in [0,T]}\beta(t) >0$. Let $t_0 \in (0,T)$. The proof immediately follows by applying Theorem \ref{thm:euler} for $\bu_0=\bu(t_0 - \frac{\beta_0}{4M})$.
  \end{proof}
 }
 
Before proceeding with the proof of the theorem, we will need
the following estimate of the nonlinear term.
\begin{prop}  \label{prop:nocancellation}
Let $\bu \in H_\C$ with $\|A^{1/4}\bu\|_{\beta} < \infty$. Then,
\be  \label{nocancellation}
|\l B(\bu,\bu),A^{r}e^{2\beta A^{1/2}}\bu\r| \lesssim 2^r\Cwien[r]\|\bu\|_\beta \|A^{1/4}\bu\|_\beta^2.
\ee
\end{prop}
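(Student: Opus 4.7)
The plan is to pass to Fourier coefficients, absorb the Gevrey weight symmetrically via the substitution $\wv(\k):=e^{\beta|\k|}\wu(\k)$, and then combine Cauchy--Schwarz, Young's convolution inequality, and the Wiener-algebra embedding~\eqref{Sob1}.

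First, I would drop the Leray projection (since $A^r e^{2\beta A^{1/2}}\bu$ is itself divergence-free, hence pairs through $\p$ trivially) and exploit incompressibility to write $\bu\cdot\nabla\bu=\nabla\cdot(\bu\otimes\bu)$. This yields the pointwise Fourier bound
\[
|\widehat{B(\bu,\bu)}(\m)| \le |\m|\sum_{\j+\k=\m}|\wu(\j)|\,|\wu(\k)|.
\]
Combining with the elementary inequality $e^{\beta|\m|}\le e^{\beta|\j|}e^{\beta|\k|}$ (valid because $|\m|\le|\j|+|\k|$ whenever $\j+\k=\m$) converts all exponential weights into factors of $|\wv(\cdot)|$, so that the inner product is bounded by a constant multiple of
\[
\sum_{\m\in\tZ^d}|\m|^{2r+1}|\wv(\m)|\,(|\wv|*|\wv|)(\m).
\]

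Next I would split the weight as $|\m|^{2r+1}=|\m|^{r+1/2}\cdot|\m|^{r+1/2}$ and apply Cauchy--Schwarz in $\m$. One factor of $|\m|^{r+1/2}$ stays with $|\wv(\m)|$ and contributes, up to the Parseval constant, precisely $\|A^{1/4}\bu\|_{\beta}$. For the other factor, using $|\m|\le 2\max(|\j|,|\k|)$ together with $\j\leftrightarrow\k$ symmetrization produces the pointwise estimate
\[
|\m|^{r+1/2}(|\wv|*|\wv|)(\m) \le 2^{r+3/2}\bigl((|\cdot|^{r+1/2}|\wv|)*|\wv|\bigr)(\m),
\]
after which Young's inequality $\ell^2*\ell^1\subset\ell^2$ yields a second factor of $\|A^{1/4}\bu\|_{\beta}$ together with the $\ell^1$-norm $\|\wv\|_{\ell^1}=\|e^{\beta A^{1/2}}\bu\|_{\cW}$.

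The final step is to close the estimate by controlling $\|e^{\beta A^{1/2}}\bu\|_{\cW}$. Applying~\eqref{Sob1} with $s=r-\tfrac12>\tfrac{d}{2}$ (legitimate under the standing hypothesis $r>\tfrac{d+1}{2}$) gives
\[
\|e^{\beta A^{1/2}}\bu\|_{\cW} \le \Cwien[r]\,\|A^{(r-1/2)/2}e^{\beta A^{1/2}}\bu\| \le \Cwien[r]\,\|\bu\|_{\beta},
\]
where the second inequality uses~\eqref{poincare} together with $\lambda_1\ge 1$. Collecting the constants delivers the advertised bound with explicit factor of order $2^{r+3/2}\Cwien[r]$. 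The main technical point---and precisely the reason the argument closes in the absence of dissipation---is the symmetric splitting of $|\m|^{r+1/2}$ across the convolution: it produces exactly two copies of $\|A^{1/4}\bu\|_{\beta}$, playing the role of mild Gevrey-induced dissipation, paired with only a single copy of $\|\bu\|_{\beta}$, which is the structure needed to close the energy-type inequality for the complexified equation~\eqref{cxeuler}.
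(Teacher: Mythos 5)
Your proof is correct and follows essentially the same route as the paper's: distribute the Gevrey weight $e^{2\beta|\m|}$ across the convolution via the triangle inequality, split the top-order frequency power with a $2^{r}$-type constant, reduce to an $\ell^2\cdot\ell^2\cdot\ell^1$ pairing via Young's inequality, and control the $\ell^1$ (Wiener) factor by $\Cwien[r]\|\bu\|_\beta$ using \eqref{Sob1} with $s=r-\tfrac12$. The only differences are bookkeeping: the paper keeps the single derivative as a factor $|\j|$ on the middle term and redistributes its half-powers via $|\j|^{1/2}\lesssim|\h|^{1/2}|\k|^{1/2}$, whereas you carry it as $|\m|$ on the outer frequency and absorb it through the symmetric splitting of $|\m|^{r+1/2}$; both land on the same $\|\bu\|_\beta\|A^{1/4}\bu\|_\beta^2$ structure.
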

\begin{proof}
Observe that for $\h+\j=\k,\ \h, \j, \k \in \tZ^d$, we have
\[ 
	|\k|^r \le 2^{r-1}(|\h|^r + |\j|^r).
\]
Thus,
\ba
\lefteqn{ |\l B(\bu,\bv),A^{r}e^{2\beta A^{1/2}}\bw\r| } \nn \\
& &\le  \sum_{\h+\j-\k=\0}|\wu(\h)||\j||\wv(\j)||\k|^{2r}|\ww(\k)|e^{2\beta |\k|}
\nn \\
& & \le \: 2^{r-1}\hspace{-10pt}\sum_{\h+\j-\k=\0}|\h|^r|\wu(\h)||\j||\wv(\j)||\k|^{r}|\ww(\k)|e^{2\beta |\k|} \nn \\
& & \qquad + \quad 2^{r-1}\hspace{-10pt}\sum_{\h+\j-\k=\0}|\wu(\h)||\j||\j|^r|\wv(\j)||\k|^{r}|\ww(\k)|e^{2\beta |\k|}.  \label{intermed1}
\ea
Because $\j, \h, \k \neq \0$, we have $\min\{|\j|, |\h|, |\k|\} \ge 1$ and therefore,
\be  \label{ineq1}
|\j| \le |\h|+|\k| \le 2 |\h| |\k| \ \mbox{which implies}\ |\j|^{\frac{1}{2}} \lesssim |\h|^{\frac{1}{2}}|\k|^{\frac{1}{2}}.
\ee
From \eqref{intermed1} and \eqref{ineq1}, we have
\ba
\lefteqn{ |\l B(\bu,\bv),A^{r}e^{2\beta A^{1/2}}\bw\r| } \nn \\
& & \le \: 2^{r-1/2}\hspace{-10pt} \sum_{\h+\j-\k=\0}e^{\beta|\h|}|\h|^{r+\frac{1}{2}}|\wu(\h)|e^{\beta|\j|}|\j|^{\frac{1}{2}}|\wv(\j)||k|^{r+\frac{1}{2}}|\ww(\k)|e^{\beta |\k|}  \nn \\
& & \quad + \quad 2^{r-1/2}\hspace{-10pt}\sum_{\h+\j-\k=\0}e^{\beta |\h|}|\h|^{\frac{1}{2}}|\wu(\h)|e^{\beta |\j|}|\j|^{r+\frac{1}{2}}|\wv(\j)||\k|^{r+\frac{1}{2}}|\ww(\k)|e^{\beta |\k|},  \nn \\
& & \lesssim 2^r\left( \|A^{\frac{1}{4}}e^{\beta A^{1/2}}\bv\|_{\cW}\|A^{\frac{1}{4}}\bu\|_\beta 
\|A^{\frac{1}{4}}\bw\|_\beta + \|A^{\frac{1}{4}}e^{\beta A^{1/2}}\bu\|_{\cW}\|A^{\frac{1}{4}}\bv\|_\beta \|A^{\frac{1}{4}}\bw\|_\beta \right),\nn \\
& & \lesssim 2^r\Cwien[r]\left(
\|\bv\|_\beta \|A^{\frac{1}{4}}\bu\|_\beta 
\|A^{\frac{1}{4}}\bw\|_\beta + \|\bu\|_\beta \|A^{\frac{1}{4}}\bv\|_\beta \|A^{\frac{1}{4}}\bw\|_\beta \right), \label{final}
\ea
where to obtain \eqref{final} we used \eqref{Sob1} with $s := r - \frac12 > \frac{d}{2}$.
We readily obtain
\be  \label{keyineq}
|\l B(\bu,\bu),A^{r}e^{2\beta A^{1/2}}\bu\r| \lesssim 2^r\Cwien[r]\|\bu\|_\beta \|A^{1/4}\bu\|_\beta^2.
\ee

\end{proof}

 \subsection{Proof of Theorem \ref{thm:euler}.}
 \begin{proof}
 Recall that for each $N \in \N$, $H_N=\Span\{\e_1, \cdots , \e_n\}\subset H_\C$, where $\{\e_i\}_{i=1}^\infty$ is the complete, orthonormal system (in $H_\C$) of eigenvectors of $A$. Denote the orthogonal projection on $H_N$ by $P_N$.
 The Galerkin system corresponding to \eqref{cxeuler} is given by
 \be \label{galerkin}
 \frac{d\bu_N}{d\zeta} + P_N B(\bu_N,\bu_N) = 0,\ \bu_N(0)=P_N\bu_0,\ \bu_N(\zeta) \in H_N.
 \ee
 The Galerkin system is an ODE with a quadratic nonlinerity. Therefore it admits a unique solution  in a neighborhood of the origin in $\C$. We will obtain \textit{a priori} estimates 
 on the Galerkin system in ${\cal R}$ (defined in \eqref{cxregion}) independent of $N$. This will show that the Galerkin system corresponding to \eqref{cxeuler} has a solution for all $\zeta \in {\cal R}$ and
 forms a normal family on the domain ${\cal R}$. We can then pass to the limit through a subsequence by (the Hilbert space-valued version of) Montel's theorem to obtain a solution of \eqref{cxeuler} in  ${\cal R}$. Since we will obtain estimates independent of $N$, henceforth we will denote by $\bu(\cdot)$ a solution to \eqref{galerkin}, i.e., we will drop the subscript $N$.
 
 Fix $\theta \in [0,2\pi)$, and let 
 \[
 \zeta = s e^{i\theta},\  s>0.
 \]
 We assume that the initial data $\bu_0$ satisfies $\|\bu_0\|_{\beta_0} < \infty $ for some $\beta_0>0$. Fix $\delta >0$, to be chosen later and define the time-varying norm
\[
\bn \bu(\zeta)\bn = \|\bu(\zeta)\|_{\beta_0 - \delta s}.
\]
The corresponding (time-varying) inner product will be denoted by $(( , ))$, i.e.,
\bas
\lefteqn{((\bu,\bv))}\\
 & &  =
 \l A^{r/2}e^{(\beta_0-\delta s)  A^{1/2}}\bu,
 A^{r/2}e^{(\beta_0-\delta s) A^{1/2}}\bv\r \\
& & = \l \bu,
A^re^{2(\beta_0-\delta s) A^{1/2}}\bv\r. 
\eas
Taking the inner-product of \eqref{cxeuler} (in $H_{\C}$) with $A^re^{2(\beta_0-\delta s) A^{1/2}}\bu$, then multiplying by $e^{i\theta}$, and finally taking the real part of the resulting equation, we readily obtain
\[
	\frac12 \ds \bn \bu(\zeta)\bn^2 + \delta \, \bn A^{1/4}\bu(\zeta)\bn^2= -Re\left(e^{i\theta} ((B(\bu(\zeta),\bu(\zeta)),\bu(\zeta)))\right)
	\leq | ((B(\bu(\zeta),\bu(\zeta)),\bu(\zeta))) |.
\]
For $s<\frac{\beta_0}{\delta}$, using Proposition \ref{prop:nocancellation},  we obtain
\be  \label{cxineq}
 \frac12 \ds \bn \bu\bn^2 + \delta \, \bn A^{1/4}\bu\bn^2
 \lesssim  2^r\Cwien[r]\bn \bu \bn\bn A^{1/4}\bu\bn^2.
 \ee
 Now choose 
  \[
	  \delta = C2^r\Cwien[r] \|\bu_0\|_{\beta_0}. 
 \]

 From \eqref{cxineq},
we see that $\bn \bu \bn$ is non-increasing  and
\[
	\bn \bu(\zeta)\bn \le \|\bu_0\|_{\beta_0}\ \forall\ \zeta =se^{i\theta},\ 0 < s < \frac{\beta_0}{\delta}.
\]
In particular, this means
\[
	\sup_{\zeta \in {\cal R}}\|A^{r/2}\bu(\zeta)\| \le \|\bu_0\|_{\beta_0}.
\]
As remarked above, the proof is now complete by invoking Montel's theorem.
\end{proof}
 
 \section{Surface Quasi-geostrophic Equations}  \label{sec:SQG}
 We consider the inviscid, two-dimensional (surface) quasi-gesotrophic equation (henceforth SQG) on 
$\Omega=[0,2\pi]^2$, given by 
\begin{gather}    \label{QG}
\begin{alignat}{2}
\partial_t \eta +\bu\cdot \nabla \eta =0&, \quad&& \text{in }\Omega\times \R_+,\\
\bu = [-R_2\eta,R_1\eta]^T&, \quad&& \text{in }\Omega\times \R_+,\\
\eta(0) =\eta_0&, \quad&& \text{in }\Omega.
\end{alignat}
\end{gather}
Here $\bu$ is the velocity field, $\eta$ is the temperature, the operator $\Lambda = (-\Delta)^{1/2}$
(with $\Delta $ denoting the Laplacian) and
the operators $R_i = \partial_i\Lambda^{-1}, i=1,2$,  are the usual Riesz transforms. 

Observe that by the definition of $u$, it is divergence-free. Also, without loss of generality, we will take $u$ and $\eta$ to be mean-free, i.e.,
\[
\int_{\Omega} \bu =\0, \int_{\Omega} \eta =0.
\]

The SQG was introduced in \cite{cmt} and variants of it
arises in geophysics and meteorology (see, for instance \cite{Pedlosky}).
Moreover, the critical SQG is the dimensional analogue of the three dimensional Navier-Stokes equations. Existence and regularity issues for the viscous and inviscid cases were first extensively examined in \cite{resnick}. This equation, particularly the dissipative case with various fractional orders of dissipation,  has received considerable attention recently;  see \cite{caff:vass,  cwu,  kiselev:note, kis:naz:vol}, and the references therein. As in section~\ref{sec:Euler}, our focus here is time analyticity of the inviscid SQG, with values in an appropriate Gevrey class.
 
 As before, for $r > \frac 32 , \beta>0$, we define
 \[
 \|\eta\|_\beta= \|\Lambda^{r} e^{\beta \Lambda}\eta\|\ \mbox{and}\ 
 \|\bu\|_\beta = \|A^{r/2}e^{\beta A^{1/2}}\bu\|.
 \]
  Note that because $\bu$ is the Riesz transform of $\eta$, we have 
 $\|\eta\|_{\beta} \sim \|\bu\|_{\beta}$.
 
  \begin{theorem}
 Let $\eta_0$ be such that $\|\eta_0\|_{\beta_0} < \infty $ for some $\beta_0 >0$.
 The complexified inviscid SQG equation
 \be \label{cxsqg}
 \frac{d\eta}{d \zeta} + B(\bu,\eta)=0, \eta(0)=\eta_0, \ \mbox{where}\ B(\bu,\eta)=\bu\cdot \nabla \eta,
 \ee
 admits a unique solution in the region 
 \be  \label{cxsqgregion}
 {\cal R}= \left\{\zeta=se^{i\theta} : \theta \in 
 [0,2\pi), 0 < s < \frac{C\beta_0}{ 2^{r}\Cwien[r]\|\eta_0\|_{\beta_0}}\right\}.
 \ee
 \end{theorem}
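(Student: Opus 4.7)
The strategy closely parallels the proof of Theorem~\ref{thm:euler}. First I would set up the Galerkin approximation of~\eqref{cxsqg} by projecting onto the span of the first $N$ eigenfunctions of $A=-\Delta$ acting on $\dot L^2(\Omega)$. Since $\bu=R^\perp\eta$ depends linearly on $\eta$, the resulting finite-dimensional complex system is an ODE with quadratic nonlinearity and thus has a unique local holomorphic solution by Picard. The heart of the matter is an \emph{a priori} bound, independent of $N$, on a time-varying Gevrey norm of $\eta_N$ valid throughout the whole sector $\mathcal R$ of~\eqref{cxsqgregion}.

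The first technical step is to prove the scalar analogue of Proposition~\ref{prop:nocancellation}, namely
\[
\bigl|\l B(\bu,\eta),\Lambda^{2r}e^{2\beta\Lambda}\eta\r\bigr|
\lesssim 2^{r}\Cwien[r]\,\|\eta\|_\beta\,\|\Lambda^{1/2}\eta\|_\beta^{2}.
\]
Its derivation follows the same Fourier splitting used in Proposition~\ref{prop:nocancellation}: expand in Fourier series, apply $|\k|^{r}\le 2^{r-1}(|\h|^{r}+|\j|^{r})$ to the wavenumber on the test function, distribute the $|\j|$ coming from $\nabla\eta$ as $|\j|^{1/2}\lesssim|\h|^{1/2}|\k|^{1/2}$ (valid because $\h,\j,\k\in\tZ^{2}$ are nonzero), recognise the two resulting convolution sums as a product of a Wiener-algebra norm with two $\ell^{2}$ norms, and bound the Wiener piece by~\eqref{Sob1} with $s=r-\tfrac12>d/2=1$. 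The only new ingredient compared with the Euler case is that one slot is occupied by $\bu$ rather than $\eta$; but because $\widehat\bu(\k)$ is obtained from $\widehat\eta(\k)$ by a Riesz multiplier of modulus at most $1$, we have $|\widehat\bu(\k)|\le|\widehat\eta(\k)|$, so $\|\bu\|_\beta\le\|\eta\|_\beta$ and $\|A^{1/4}\bu\|_\beta\le\|\Lambda^{1/2}\eta\|_\beta$ at every step.

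With this estimate in hand I would fix $\theta\in[0,2\pi)$, parametrize $\zeta=se^{i\theta}$, and introduce the time-varying norm $\bn\eta(\zeta)\bn := \|\eta(\zeta)\|_{\beta_0-\delta s}$ together with its associated complex inner product. Pairing the Galerkin equation with $\Lambda^{2r}e^{2(\beta_0-\delta s)\Lambda}\eta$, multiplying by $e^{i\theta}$, and taking real parts produces the benign term $\delta\,\bn\Lambda^{1/2}\eta\bn^{2}$ on the left-hand side from differentiating the moving radius, yielding
\[
\tfrac12\ds\bn\eta\bn^{2}+\delta\,\bn\Lambda^{1/2}\eta\bn^{2}
\lesssim 2^{r}\Cwien[r]\,\bn\eta\bn\,\bn\Lambda^{1/2}\eta\bn^{2}.
\]
Choosing $\delta = C\,2^{r}\Cwien[r]\|\eta_0\|_{\beta_0}$ with $C$ large enough makes $\bn\eta(\zeta)\bn$ nonincreasing along each ray for $0<s<\beta_0/\delta$, i.e.\ throughout $\mathcal R$, and forces $\bn\eta(\zeta)\bn\le\|\eta_0\|_{\beta_0}$.

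The main obstacle I anticipate is bookkeeping rather than analysis: one must verify that the Riesz transform commutes cleanly enough with $e^{\beta\Lambda}$ and with fractional powers of $A$ so that the pointwise Fourier bound $|\widehat\bu|\le|\widehat\eta|$ genuinely propagates to $\|A^{1/4}e^{\beta A^{1/2}}\bu\|_{\cW}\lesssim\Cwien[r]\|\eta\|_\beta$ with the correct $d=2$ constant, and that no logarithmic losses creep in at the borderline exponent $r=3/2$. Once the $N$-independent bound is secured, the Hilbert-space-valued Montel theorem extracts a subsequential limit that is holomorphic on $\mathcal R$ and solves~\eqref{cxsqg}; uniqueness in this class follows from a standard subtraction argument in the same Gevrey norm using the nonlinear estimate above together with Gr\"onwall.
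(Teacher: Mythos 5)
Your proposal is correct and follows essentially the same route as the paper: the same Fourier-splitting estimate $|\l B(\bu,\eta),\Lambda^{2r}e^{2\beta\Lambda}\eta\r|\lesssim 2^{r}\Cwien[r]\|\eta\|_\beta\|\Lambda^{1/2}\eta\|_\beta^{2}$ (using that $\bu$ is a Riesz transform of $\eta$), the same time-varying norm $\|\eta(\zeta)\|_{\beta_0-\delta s}$ with $\delta = C2^{r}\Cwien[r]\|\eta_0\|_{\beta_0}$, and the same Galerkin-plus-Montel conclusion. Your worry about a borderline loss at $r=3/2$ is moot since the standing hypothesis is $r>3/2$ strictly, so \eqref{Sob1} applies with $s=r-\tfrac12>1=d/2$.
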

 \begin{proof}
 Proceeding in a similar manner as in Proposition \ref{prop:nocancellation}, we obtain
 \begin{multline}
|\l B(\bu,\eta),\Lambda^{2r}e^{2\beta \Lambda}\eta\r|  \lesssim 2^r\Cwien[r]
\left(\|\eta\|_{\beta}\|A^{1/4}\bu\|_{\beta}\|\Lambda^{1/2}\eta\|_{\beta} +  \|\bu\|_{\beta}\|\Lambda^{1/2}\eta\|^2_{\beta}\right)\\
 \lesssim 2^r\Cwien[r] \|\eta\|_{\beta}\|\Lambda^{1/2}\eta\|^2_{\beta},
 \label{sqg:nocancellation}
\end{multline}
 where the last inequality follows by noting that $u$ is the Riesz transform of $\eta$.
 
 Fix $\theta\in[0,2\pi)$. Let
 \[
 \zeta = s e^{i\theta}, s>0.
 \]
The initial data $\eta_0$ satisfies $\|\eta_0\|_{\beta_0} < \infty $ for some $\beta_0>0$. Fix $\delta >0$, to be specified later, and define the time-varying norm
\[
	\bn \eta \bn = \|\eta(\zeta)\|_{\beta_0-\delta s},
\]
and the corresponding (time-varying) inner product, $(( , ))$, as we did in the proof of Theorem~\ref{thm:euler}.
\comments{
\bas
\lefteqn{((\eta_1,\eta_2))}\\
 & &  =
\l e^{(\beta_0-\delta s \cos \theta)  \Lambda}\eta_1,
e^{(\beta_0-\delta s \cos \theta)  \Lambda}\eta_2\r \\
& & = \l \eta_1,
e^{(2\beta_0-2\delta s \cos \theta)  \Lambda}\eta_2\r. 
\eas
}

Taking the inner-product of \eqref{cxsqg} (in $H_{\C}$) with $\Lambda^{2r}e^{2(\beta_0-\delta s) \Lambda}\eta$, multiplying by $e^{i\theta}$ and taking the real part, we obtain
 \[
	 \frac12 \ds \bn \eta (\zeta)\bn^2 + \delta\, \bn \Lambda^{1/2}\eta(\zeta)\bn^2=Re\left(- e^{i\theta} ((B(\bu(\zeta),\eta(\zeta)),\eta(\zeta))) \right).
\]
 Using \eqref{sqg:nocancellation}, we deduce
 \be  \label{cxetaineq}
 \frac12 \ds \bn \eta \bn^2 + \delta\, \bn \Lambda^{1/2}\eta\bn^2
 \lesssim  2^r\Cwien[r]\bn \eta \bn
\, \bn \Lambda^{1/2}\eta\bn^2.
 \ee
 Now choose 
 \[
 \delta =  C2^r\Cwien[r]\|\eta_0\|_{\beta_0}.
 \]
 From \eqref{cxetaineq},
we see that $\bn \eta \bn$ is non-increasing  and
\[
	\bn \eta(\zeta)\bn \le \|\eta_0\|_{\beta_0}\ \forall\ \zeta =se^{i\theta},\ 0 < s < \frac{\beta_0}{\delta}.
\]
In particular, this means
\[
\sup_{z \in {\cal R}}\|\eta(z)\| \le \|\eta_0\|_{\beta_0}.
\]
This establishes a uniform bound on the Galerkin system and the proof is complete by invoking Montel's theorem as before.
 \end{proof}

\section{Inviscid Boussinesq Equations}
The inviscid  Boussinesq system (without rotation) in the
periodic  domain $\Omega:=[0,2\pi]^d, d=2,3$, for time $t \ge 0$ is given by
\begin{subequations}\label{Bouss}
\begin{alignat}{2}
\label{Bouss_mo}
\partial_t \bu + (\bu\cdot\nabla)\bu +\nabla p&=  \eta\, g \mathbf{e},
\quad&& \text{in }\Omega\times \R_+,\\
\label{Bouss_den}
\partial_t\eta + (\bu\cdot\nabla)\eta &=0,
\quad&& \text{in }\Omega\times \R_+,\\
\label{Bouss_div}
\nabla \cdot \bu &=0,
\quad&& \text{in }\Omega\times\R_+,\\
\label{Bouss_init}
\bu(0)=\bu_0,
\quad\eta(0)&=\eta_0,
\quad&& \text{in }\Omega,
\end{alignat}
\end{subequations}
equipped with periodic boundary conditions in space.  
Here $\mathbf e$ denotes the unit vector in $\R^d$ pointing upward
and $g$ denotes the (scalar) acceleration due to gravity. 
 The unknowns are the fluid velocity field
$\bu$, the fluid pressure
$p$, and the function $\eta$, which may be
interpreted physically as the temperature. 
The Boussinesq system arises in the study of atmospheric, oceanic and astrophysical turbulence, particularly where rotation and stratification play a dominant role 
\cite{Pedlosky, salmon}. 
We will follow the notation for the norms  
as in Section~\ref{sec:Euler} and Section~\ref{sec:SQG}

\begin{theorem}
 Let $(\bu_0,\eta_0)$ be such that $\|(\bu_0,\eta_0)\|_{\beta_0} < \infty $ for some $\beta_0 >0$, where $\|(\bu_0,\eta_0)\|_{\beta_0}^2=\|\bu_0\|_{\beta_0}^2 + \|\eta_0\|_{\beta_0}^2$.
 The complexified inviscid Boussinesq equations \eqref{Bouss}
 admit a unique solution $(\bu(\zeta),\eta(\zeta))$ in the region 
 \ba  
 & & {\cal R}= \left\{\zeta=se^{i\theta}: \theta \in 
 [0,2\pi), \right. 
\left. 0 < s < 
 \min\left\{\frac{C\beta_0}{ 2^r\Cwien[r]\|(\bu_0,\eta_0)\|_{\beta_0}},  \frac{2\ln2}{g}\right\}\ \right\}. \label{boussregion}
 \ea
 \end{theorem}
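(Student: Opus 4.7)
The plan is to combine the techniques used in the proofs of Theorem~\ref{thm:euler} and the SQG theorem, with a modification to accommodate the linear buoyancy coupling $g\eta\mathbf{e}$. As in those theorems, I would first establish Galerkin-level a priori estimates on the complexified version of \eqref{Bouss} that are uniform in the Galerkin level and valid throughout $\mathcal R$, and then pass to the limit via the Hilbert-space version of Montel's theorem. I henceforth drop the Galerkin subscript.

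Fix $\theta\in[0,2\pi)$, write $\zeta = se^{i\theta}$, and let $\delta>0$ be a parameter to be chosen. Introduce the combined time-varying norm $\bn(\bu,\eta)\bn^2 := \|\bu(\zeta)\|_{\beta_0-\delta s}^2 + \|\eta(\zeta)\|_{\beta_0-\delta s}^2$, with the associated inner product $((\cdot,\cdot))$. Taking the inner product of \eqref{Bouss_mo} against $A^r e^{2(\beta_0-\delta s)A^{1/2}}\bu$ and of \eqref{Bouss_den} against $\Lambda^{2r}e^{2(\beta_0-\delta s)\Lambda}\eta$, multiplying by $e^{i\theta}$, summing, and taking real parts, one arrives at
\be
\tfrac12 \ds \bn(\bu,\eta)\bn^2 + \delta \bn A^{1/4}(\bu,\eta)\bn^2 \le |((B(\bu,\bu),\bu))| + |((B(\bu,\eta),\eta))| + g\,|((\eta\mathbf{e},\bu))|.
\ee
Proposition~\ref{prop:nocancellation} and its SQG analogue bound the two nonlinear contributions by $C\cdot 2^r \Cwien[r]\bn(\bu,\eta)\bn\cdot \bn A^{1/4}(\bu,\eta)\bn^2$, while Cauchy--Schwarz handles the buoyancy term via $g\,|((\eta\mathbf{e},\bu))| \le \tfrac{g}{2}\bn(\bu,\eta)\bn^2$.

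Setting $N(s) := \bn(\bu(\zeta),\eta(\zeta))\bn$ and choosing $\delta = 2C\cdot 2^r \Cwien[r]\|(\bu_0,\eta_0)\|_{\beta_0}$, I would close the estimate by a standard bootstrap: under the assumption $N(s)\le 2N(0)$ on an initial subinterval, the cubic terms are absorbed into the Gevrey dissipation $\delta\bn A^{1/4}(\bu,\eta)\bn^2$, reducing the inequality to $\ds N^2 \le g N^2$. Gr\"onwall then yields $N(s) \le N(0) e^{gs/2}$, so the bootstrap closes as long as $e^{gs/2}\le 2$, that is $s \le \tfrac{2\ln 2}{g}$, which is the second constraint in \eqref{boussregion}. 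The constraint $s < \beta_0/\delta$, required to keep the Gevrey weight $\beta_0-\delta s$ positive, gives exactly the first bound. The resulting uniform control on $N$ then supplies the normal family needed for Montel's theorem on $\mathcal R$.

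The main obstacle, absent from the pure Euler and SQG cases, is that the buoyancy coupling admits no cancellation in the Gevrey inner product and, being of lower differential order than the nonlinearities, cannot be absorbed into the mild dissipation $\delta\bn A^{1/4}\cdot\bn^2$. Consequently, the combined norm is forced to grow exponentially at rate $g$, which is exactly what produces the $g$-dependent ceiling $\tfrac{2\ln 2}{g}$ on the radius of analyticity in \eqref{boussregion}; any sharpening of this ceiling would require exploiting the skew structure of the coupling more carefully rather than relying on Cauchy--Schwarz.
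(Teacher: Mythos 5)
Your proposal is correct and follows essentially the same route as the paper: the same combined Gevrey energy estimate, the same Cauchy--Schwarz treatment of the buoyancy term yielding the linear growth at rate $g$, the same absorption of the cubic nonlinear terms into the $\delta$-dissipation under a smallness/bootstrap condition, and the same Gr\"onwall argument producing the factor $e^{gs/2}\le 2$ that gives the ceiling $\tfrac{2\ln 2}{g}$. The only cosmetic difference is that you run Gr\"onwall on the $\ell^2$-combined norm with an explicit bootstrap hypothesis $N(s)\le 2N(0)$, whereas the paper phrases the continuity condition in terms of $\bn\bu\bn+\bn\eta\bn\lesssim \delta/(2^r\Cwien[r])$ and converts between the sum and the square root of the sum of squares.
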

\begin{proof}
	We proceed as in Section \ref{sec:Euler} and Section \ref{sec:SQG} by taking the inner product of the complexified versions of \eqref{Bouss_mo} and \eqref{Bouss_den} with $A^re^{2(\beta_0-\delta s)A^{1/2}}u$ and $\Lambda^{2r}e^{2(\beta_0-\delta s)\Lambda}\eta$ respectively, then multiplying by $e^{i\theta}$ and taking the real part. Using \eqref{nocancellation} and \eqref{sqg:nocancellation} and adding the results,
for $(\bu(\zeta),\eta(\zeta)), \zeta = s e^{i\theta}$, we obtain
\ba
& & \frac12 \ds (\bn \bu \bn^2+ \bn \eta \bn^2) + 
\delta (\bn \Lambda^{1/2}\bu\bn^2+ \bn \Lambda^{1/2} \eta\bn^2 ) \nn \\
& & \lesssim 2^r\Cwien[r](\bn \bu\bn+\bn \eta \bn)
(\bn \Lambda^{1/2}\bu\bn^2+ \bn \Lambda^{1/2} \eta\bn^2 )+g \bn \eta \bn \bn \bu\bn \nn \\
& & \le 2^r\Cwien[r](\bn \bu\bn+\bn \eta \bn)(\bn \Lambda^{1/2}\bu\bn^2+ \bn \Lambda^{1/2} \eta\bn^2 )+\frac{g}{2}(\bn \bu\bn^2 + \bn \eta \bn^2).
\ea
Thus, as long as  
\be \label{deltacond}
(\bn \bu \bn + \bn \eta \bn) \lesssim \frac{\delta}{2^r\Cwien[r]},
\ee
by the Gronwall inequality, we have 
\be \label{gronwall}
\bn \bu\bn^2 + \bn \eta \bn^2 \le e^{T g}(\|\bu_0\|_{\beta_0}^2 + 
\|\eta_0\|_{\beta_0}^2), 0<s\le T.
\ee
Using the fact that $(a+b) \le \sqrt {2(a^2+b^2)}$, as long as \eqref{deltacond} holds,
from \eqref{gronwall} we have
\be \label{finalbd}
(\bn \bu\bn + \bn \eta \bn) \le e^{\frac{Tg}{2}} \sqrt 2\|(\bu_0,\eta_0)\|_{\beta_0}.
\ee
Now choose 
\bes
\delta = C2^r\Cwien[r]\|(\bu_0,\eta_0)\|_{\beta_0}.
\ees
For all $0<s<T=\min\{\frac{\beta_0}{\delta}, \frac{2\ln 2}{g}\}$, \eqref{deltacond} is satisfied and consequently, \eqref{finalbd} holds.

\end{proof}

\section{Inviscid Magnetohydrodynamic Equations}
The inviscid incompressible magnetohydrodynamic system in the
periodic domain $\Omega:=[0,2\pi]^d, d=2,3$, for time $t \ge 0$ is given by
the following system 
\begin{subequations}\label{MHD_original}
\begin{alignat}{2}
\label{MHD_original_v}
\partial_t \bu + (\bu\cdot\nabla)\bu -\tfrac{1}{S}(\bb\cdot\nabla)\bb +\nabla (\tfrac{1}{\rho_0}p + \tfrac{|\bb|^2}{2S})&= \0,
\quad&& \text{in }\Omega\times \R_+,\\
\partial_t \bb + (\bu\cdot\nabla)\bb - (\bb\cdot\nabla)\bu &= \0,
\quad&& \text{in }\Omega\times \R_+,\\
\label{MHD_original_div}
\nabla \cdot \bu =0,\quad \nabla \cdot \bb &=0,
\quad&& \text{in }\Omega\times\R_+,\\
\label{MHD_original_init}
\bu(0)=\bu_0,
\quad \bb(0)&= \bb_0,
\quad&& \text{in }\Omega,
\end{alignat}
\end{subequations}
equipped with periodic boundary conditions in space. Here, $\bu$ represents the fluid velocity field, $\bb$ the magnetic field and $p$ the fluid pressure. The constant $\rho_0$ is the fluid density, and $S = \rho_0\mu_0$, where $\mu_0$ is the permeability of free space. 

The magnetohydrodynamic equations govern the evolution of an electrically conductive fluid under the influence of a magnetic field, and so are useful in the design of fusion reactors, or the study of solar storms and other natural phenomenon. See \cite{MHD-background} for more on the derivation of \eqref{MHD_original}, and \cite{MHD-examples1,MHD-examples4} for some applications of the magnetohydrodynamic equations (MHD). The existence and uniqueness of solutions to the incompressible MHD has been studied for the viscous case in \cite{MHD-existence,MHD-existence2}, for example, and in \cite{MHD-existence-inviscid1, MHD-existence-inviscid} for the inviscid case (which we consider in this paper). The space analyticity of solutions of \eqref{MHD_original} is discussed in \cite{MHD-analyticity}, whereas in the present work we give criteria for solutions to be holomorphic functions of both the time and space variables.

By rewriting the equations in terms of the Els\"asser variables (which are defined via the transformations $\bv = \bu + \tfrac{1}{\sqrt{S}}\bb$, $\bw = \bu - \tfrac{1}{\sqrt{S}}\bb$), we obtain the equivalent system
\begin{subequations}\label{MHD}
\begin{alignat}{2}
\label{MHD_v}
\partial_t \bv + (\bw\cdot\nabla)\bv +\nabla {\scriptstyle \mathcal{P}}&= \0,
\quad&& \text{in }\Omega\times \R_+,\\
\partial_t \bw + (\bv\cdot\nabla)\bw +\nabla {\scriptstyle \mathcal{P}}&= \0,
\quad&& \text{in }\Omega\times \R_+,\\
\label{MHD_div}
\nabla \cdot \bv =0,\quad \nabla \cdot \bw &=0,
\quad&& \text{in }\Omega\times\R_+,\\
\label{MHD_init}
\bv(0)=\bv_0,
\quad \bw(0)&= \bw_0,
\quad&& \text{in }\Omega,
\end{alignat}
\end{subequations}
where ${\scriptstyle \mathcal{P} }= \tfrac{1}{\rho_0}p + \tfrac{|\bv - \bw|^2}{8}$.

\begin{theorem}
 Let $(\bv_0,\bw_0)$ be such that $\|(\bv_0,\bw_0)\|_{\beta_0} < \infty $ for some $\beta_0 >0$. The complexified inviscid magnetohydrodynamic equations \eqref{MHD} admit a unique solution $(\bv(\zeta),\bw(\zeta))$ in the region 
 \ba  
 & & {\cal R}= \left\{\zeta=se^{i\theta}: \theta \in 
 [0,2\pi), \nn 0 < s < 
 \frac{C\beta_0}{ 2^r\Cwien[r]\|(\bv_0,\bw_0)\|_{\beta_0}}\right\}.\label{MHDregion}
 \ea
 \end{theorem}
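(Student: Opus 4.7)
The plan is to mimic almost verbatim the Galerkin $+$ time-varying-norm $+$ Montel scheme used for the Euler case (Theorem~\ref{thm:euler}), exploiting the fact that in the Els\"asser variables \eqref{MHD} each equation is a quadratic transport equation of exactly the type handled by Proposition~\ref{prop:nocancellation}, with the small wrinkle that the two equations are coupled through the nonlinearity. I would first project \eqref{MHD} onto $H_N\times H_N$; because the right-hand side is quadratic in $(\bv,\bw)$, the resulting finite-dimensional complex ODE admits a unique local holomorphic solution in $\zeta$, and as in Section~\ref{sec:Euler} the task reduces to obtaining an $N$-independent a priori bound on the Gevrey norm throughout ${\cal R}$.

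Fix $\theta\in[0,2\pi)$, set $\zeta=se^{i\theta}$, and use the sliding Gevrey norm $\bn\cdot\bn=\|\cdot\|_{\beta_0-\delta s}$ with $\delta>0$ to be chosen. Taking the inner product in $H_\C$ of the $\bv$-equation with $A^r e^{2(\beta_0-\delta s) A^{1/2}}\bv$, multiplying by $e^{i\theta}$, taking the real part, and using $\nabla\cdot\bv=0$ to eliminate the pressure yields
\[
 \tfrac12 \ds \bn \bv \bn^2 + \delta\, \bn A^{1/4}\bv\bn^2 \le |(( B(\bw,\bv),\bv ))|,
\]
and symmetrically for $\bw$. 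The needed nonlinear input is the genuinely bilinear form of the estimate buried in the proof of Proposition~\ref{prop:nocancellation} (see \eqref{final}):
\[
 |\l B(\bu,\bv),A^{r}e^{2\beta A^{1/2}}\bw\r| \lesssim 2^r\Cwien[r]\bigl(\|\bv\|_\beta\|A^{1/4}\bu\|_\beta + \|\bu\|_\beta\|A^{1/4}\bv\|_\beta\bigr)\|A^{1/4}\bw\|_\beta.
\]
Applying this with $(\bu,\bv,\bw)=(\bw,\bv,\bv)$ and with $(\bv,\bw,\bw)$, then invoking Young's inequality on the cross terms $\|\bv\|_\beta\|A^{1/4}\bw\|_\beta\|A^{1/4}\bv\|_\beta$ and $\|\bw\|_\beta\|A^{1/4}\bv\|_\beta\|A^{1/4}\bw\|_\beta$, and finally adding the two differential inequalities, I obtain the coupled inequality
\[
 \tfrac12 \ds(\bn \bv\bn^2+\bn \bw\bn^2) + \delta(\bn A^{1/4}\bv\bn^2+\bn A^{1/4}\bw\bn^2) \lesssim 2^r\Cwien[r](\bn \bv\bn+\bn \bw\bn)(\bn A^{1/4}\bv\bn^2+\bn A^{1/4}\bw\bn^2).
\]

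To close, I would set $\delta=C\,2^r\Cwien[r]\|(\bv_0,\bw_0)\|_{\beta_0}$ with $C$ large enough that the implicit constant above is dominated whenever $\bn\bv\bn+\bn\bw\bn\le\sqrt{2}\,\|(\bv_0,\bw_0)\|_{\beta_0}$. Since $(\bn\bv\bn+\bn\bw\bn)^2\le 2(\bn\bv\bn^2+\bn\bw\bn^2)$, a standard bootstrap then shows the coupled quantity $\bn\bv\bn^2+\bn\bw\bn^2$ is non-increasing in $s$, which delivers the uniform $N$-independent bound on ${\cal R}$; the Hilbert-space-valued Montel theorem applied coordinate-wise extracts a holomorphic limit solving the complexified system, and uniqueness follows from an analogous energy identity on the difference of two solutions. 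The one genuine obstacle, in contrast to Theorem~\ref{thm:euler}, is that neither individual equation admits a closed diagonal estimate of ``$\bn\cdot\bn$--$\bn A^{1/4}\cdot\bn^2$'' form: the Young-inequality symmetrization of the cross term is essential in order to land on a single $\delta$ that depends only on $\|(\bv_0,\bw_0)\|_{\beta_0}$. Because the coupling is purely nonlinear, no Gr\"onwall-induced shrinkage of ${\cal R}$ analogous to the Boussinesq constraint $2\ln 2/g$ appears.
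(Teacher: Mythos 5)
Your proposal is correct and follows essentially the same route as the paper: Galerkin truncation, the sliding Gevrey norm $\bn\cdot\bn=\|\cdot\|_{\beta_0-\delta s}$, the trilinear estimate \eqref{final} from the proof of Proposition~\ref{prop:nocancellation} applied to $B(\bw,\bv)$ and $B(\bv,\bw)$, Young's inequality to symmetrize the cross terms, the same choice of $\delta$, and Montel's theorem. You are in fact slightly more careful than the paper, which cites only the diagonal form \eqref{nocancellation} where the genuinely bilinear version is what is actually needed, and your observation that no Gr\"onwall-type shrinkage of ${\cal R}$ occurs (in contrast to the Boussinesq case) is accurate.
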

\begin{proof}
Proceeding as in the previous sections and using \eqref{nocancellation}, 
for a fixed $\theta\in[0,2\pi)$, for $(\bv(\zeta),\bw(\zeta)), \zeta = s e^{i\theta}$, we obtain
\ba
& & \frac12 \ds \{ \bn \bv \bn^2+ \bn \bw \bn^2\} + 
\delta (\bn \Lambda^{1/2}\bv\bn^2+ \bn \Lambda^{1/2} \bw\bn^2 ) \nn \\
& & \lesssim 2^r\Cwien[r](\bn \bv\bn+\bn \bw \bn)(\bn \Lambda^{1/2}\bv\bn^2+ \bn \Lambda^{1/2} \bw\bn^2 ) \nn \\
& & \lesssim 2^r\Cwien[r]\bn(\bv,\bw)\bn(\bn \Lambda^{1/2}\bv\bn^2+ \bn \Lambda^{1/2} \bw\bn^2 ).\label{mhdineq}
\ea

Now choose 
 \[
	 \delta = C2^r\Cwien[r]\|(\bv_0,\bw_0)\|_{\beta_0}.
 \]
 From \eqref{mhdineq},
we see that $\bn (\bv,\bw) \bn^2$ is non-increasing  and
\[
	\bn (\bv(\zeta),\bw(\zeta))\bn \le \|(\bv_0,\bw_0)\|_{\beta_0}\ \forall\ \zeta =se^{i\theta},\ 0 < s < \frac{\beta_0}{\delta}.
\]
In particular, this means
\[
\sup_{z \in {\cal R}}\|(\bv(z),\bw(z))\| \le \|(\bv_0,\bw_0)\|_{\beta_0}.
\]
This finishes the proof.

\end{proof}

\section{Analytic Nonlinearity}
In this section, we consider the more general case of an analytic nonlinearity on our basic spatial domain $\Omega:=[0,2\pi]^d$. Again, we consider an equation without viscous effects (see \cite{feti} for the dissipative version). For simplicity of exposition, we only consider the case of a scalar equation here. A vector-valued version, i.e. the case of a system, can be handled in precisely the same way, although notationally it becomes more cumbersome.
Let 
\[
F(z)= \displaystyle\sum_{n=1}^\infty a_n z^n
\]
be a real analytic function in a neighborhood of the origin. 
\comments{Here $\z=(z_1,\cdots,z_n) \in \R^n$ and we employ the multi-index convention $\z^\k = z_1^{k_1}\cdots z_n^{k_n}$ for 
$\k=(k_1,\cdots,k_n)$.}
  The ``majorizing function" for $F$ is defined to be
\be \label{maj}
F_M(s) = \sum_{n =1}^\infty|a_n|s^n, \ \ s<\infty.
\ee
The functions $F$ and $F_M$ are clearly analytic in the open balls (in $\R^d$ and $\R$ respectively) with  center zero and radius
\be \label{rad}
R_M= \sup \left\{s:F_M(s) < \infty\right\}.
\ee
We assume that $R_M>0$. The derivative of the function $F_M$, denoted by $F_M'$, is also analytic in the ball of radius $R_M$. Therefore, for any fixed $r>0$,  the function $\tF$, defined by
\be  \label{tf}
\tF(s)=\sum_{n=1}^\infty |a_n| n^{r+\frac32}(\Cwien[r])^{n-1} s^{n-1}, s \in \R,
\ee
is  analytic in the ball of radius $R_M/\Cwien[r]$. Moreover, 
\be  \label{tfinc}
\tF(s) \ge 0\  \mbox{for }\ s \ge 0\ \mbox{and}\ \tF(s_1) < \tF (s_2)\ \mbox{for}\ 0\le s_1 < s_2.
\ee

\comments
{We now consider the nonlinearity $G$  of the type
\be \label{nonlin}
G(u) = F \left(T_1 u, \cdots,T_n u \right),
\ee
for a scalar function $u:\Omega \ra \R$, 
where $T_i, 1 \le i \le n$ are given by
\[
\wh{T_iu}(\k)=m_{T_i}(\k)\wh{u}(\k), |m_{T_i}(\k)| \le C|\k|^{\alpha_i}, 1 \le i \le n.
\]
For instance, if $T_j = \partial_{x_i}$, then $m_{T_j}(\k)= \imath k_i, \k=(k_1,\cdots,k_d)$.
}

We will  consider an inviscid equation of the form
\be \label{eqn:analytic}
\partial_t u = TF(u), u(0)=u_0,
\ee
where $T$ is given by
\[
\wh{Tu}(\k)=m_{T}(\k)\wh{u}(\k), |m_{T}(\k)| \le C|\k|, \k \in \tZ^d.
\]
We will assume that \eqref{eqn:analytic} preserves the mean free condition under 
evolution. Here, the phase space $H=\dot{L^2}(\Omega)$ and 
$A=(-\Delta)|_H$. As before, we fix $r > \frac{d+1}{2}$ and define
\[
\|u\|_{\beta} = \|A^{\frac r2}e^{\beta A^{1/2}}u\|.
\]
The following proposition is elementary.
\begin{prop}  \label{prop:elementary}
For $x_1, \cdots , x_n \in \R_+$ and any $r>0$, we have
\bes  
(x_1+\cdots +x_n)^r \le n^r(x_1^r+\cdots +x_n^r).
\ees
\end{prop}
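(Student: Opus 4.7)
The plan is to reduce the inequality to a one-step comparison via the maximum of the $x_i$'s. First I would set $M = \max_{1 \le i \le n} x_i$ and observe that since every $x_i \in \R_+$ satisfies $x_i \le M$, the sum obeys $x_1 + \cdots + x_n \le nM$. Since $t \mapsto t^r$ is monotone nondecreasing on $\R_+$ for any $r > 0$, raising both sides to the $r$-th power gives $(x_1 + \cdots + x_n)^r \le n^r M^r$. Finally, because $M$ equals one of the $x_i$ and all terms $x_i^r$ are nonnegative, $M^r \le x_1^r + \cdots + x_n^r$, and chaining these bounds yields the claim.

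A shorter and sharper alternative, depending on the range of $r$, would be to invoke Jensen's inequality for the convex function $t \mapsto t^r$ when $r \ge 1$, which produces the stronger constant $n^{r-1}$, and to use subadditivity of $t \mapsto t^r$ when $0 < r \le 1$, which gives constant $1$. However, the argument via the maximum handles all $r > 0$ simultaneously and comfortably delivers the stated (non-optimal) bound $n^r$, which is all that is needed in the sequel. There is no real obstacle in this proposition; the only thing to check is the monotonicity step, which is immediate on $\R_+$.
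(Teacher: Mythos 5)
Your argument is correct and is essentially the same as the paper's: both bound the sum by $n$ times the maximum term and then use monotonicity of $t\mapsto t^r$ together with $M^r\le\sum_i x_i^r$ (the paper simply phrases this by normalizing with $\xi_i=x_i/x_1$ where $x_1$ is the maximum). No gap; the remarks on sharper constants for $r\ge 1$ or $0<r\le 1$ are a fine aside but not needed.
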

\begin{proof}
Without loss of generality, assume $x_1=\max\{x_1, \cdots , x_n\} >0$.
Let $\xi_i = \frac{x_i}{x_1}$ and note that $0 \le \xi_i \le 1$. Then,
\bes
(\sum_{i=1}^n x_i)^r 
 = x_1^r(\sum_{i=1}^n \xi_i)^r \le x_1^r(\sum_{i=1}^n 1)^r
 =n^rx_1^r \le n^{r}\sum_{i=1}^n x_i^r.
\ees

\end{proof} 
We will need the following estimate of the nonlinear term to proceed.
\begin{prop}  \label{prop:analytic}
Let $u \in H_\C$ with $\|A^{1/4}u\|_{\beta} < \infty $. 
Then
\be  \label{ineq:nonlin}
|\l TF(u), A^re^{2\beta A^{1/2}}u\r| \lesssim \tF(\|u\|_\beta)\|A^{1/4}u\|_\beta^2.
\ee
\end{prop}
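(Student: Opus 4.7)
The plan is to expand $F(u)=\sum_{n=1}^\infty a_n u^n$, reduce to an estimate for each monomial $u^n$, and sum. Concretely, I would establish
\[
|\l Tu^n,A^re^{2\beta A^{1/2}}u\r|\lesssim n^{r+3/2}\,\Cwien[r]^{\,n-1}\|u\|_\beta^{n-1}\|A^{1/4}u\|_\beta^2\qquad(n\ge 1),
\]
whereupon multiplying by $|a_n|$ and summing over $n$ reproduces $\tF(\|u\|_\beta)\|A^{1/4}u\|_\beta^2$ directly from the definition \eqref{tf}.

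To prove the monomial estimate, I would pass to Fourier series. Since $\widehat{u^n}$ is the $n$-fold convolution of $\w u$ with itself and $|m_T(\k)|\le C|\k|$, the absolute value of the pairing is bounded by
\[
\sum_{\k_1+\cdots+\k_n=\k}|\k|^{2r+1}e^{2\beta|\k|}|\w u(\k_1)|\cdots|\w u(\k_n)||\w u(\k)|.
\]
The crucial step is a symmetric split $|\k|^{2r+1}=|\k|^{r+1/2}\cdot|\k|^{r+1/2}$: one factor is kept on the test mode $\k$, and for the other I would apply $|\k|\le|\k_1|+\cdots+|\k_n|$ together with Proposition~\ref{prop:elementary} to obtain $|\k|^{r+1/2}\le n^{r+1/2}\sum_{i=1}^n|\k_i|^{r+1/2}$. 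By the full permutation symmetry of the convolution, the $n$ resulting sums are all equal, so they consolidate into a single term with prefactor $n\cdot n^{r+1/2}=n^{r+3/2}$.

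Next, distributing the exponential via $e^{2\beta|\k|}\le e^{\beta|\k|}\prod_{i=1}^n e^{\beta|\k_i|}$ (a consequence of $|\k|\le\sum|\k_i|$), the summand factors as one copy of $|\k_1|^{r+1/2}e^{\beta|\k_1|}|\w u(\k_1)|$, $n-1$ copies of $e^{\beta|\k_i|}|\w u(\k_i)|$, and one copy of $|\k|^{r+1/2}e^{\beta|\k|}|\w u(\k)|$. A Cauchy--Schwarz in $\k$ followed by Young's convolution inequality $\|f_1*\cdots*f_n\|_{\ell^2}\le\|f_1\|_{\ell^2}\prod_{i\ge 2}\|f_i\|_{\ell^1}$ bounds the double sum by $\|A^{1/4}u\|_\beta^2\cdot\|e^{\beta A^{1/2}}u\|_\cW^{n-1}$. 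Since $r>(d+1)/2$, applying \eqref{Sob1} with $s=r-\tfrac12>\tfrac d2$ to $e^{\beta A^{1/2}}u$, combined with the Poincar\'e-type inequality $\|A^{r/2-1/4}\cdot\|\le\|A^{r/2}\cdot\|$, yields $\|e^{\beta A^{1/2}}u\|_\cW\le\Cwien[r]\|u\|_\beta$, completing the monomial bound.

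The main obstacle is the careful bookkeeping in $n$. Obtaining the sharp power $n^{r+3/2}$---rather than the much cruder $n^{2r+1}$ one would get by naively applying Proposition~\ref{prop:elementary} to $|\k|^{2r+1}$ in a single step---is essential, since it is what ensures that $\tF$ has the same radius of analyticity as $F_M(\cdot/\Cwien[r])$, without which the domain of analyticity in the subsequent existence theorem would collapse. The whole argument turns on splitting $|\k|^{2r+1}$ into two equal halves so that precisely two of the $u$-factors become $\|A^{1/4}u\|_\beta$, together with the convolution's full permutation symmetry. The $n=1$ case is degenerate: the convolution is trivial and no Wiener factor appears, consistent with $\Cwien[r]^{\,0}=1$ in the $n=1$ term of \eqref{tf}.
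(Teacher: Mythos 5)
Your proof is correct and follows essentially the same route as the paper's: expand $F$ into monomials, pass to the Fourier convolution, split the weight $|\k|^{2r+1}$ into two halves of order $r+\tfrac12$ via Proposition~\ref{prop:elementary}, use permutation symmetry to get the factor $n^{r+3/2}$, apply Cauchy--Schwarz and Young's inequality, and invoke the Wiener-algebra embedding \eqref{Sob1} with $s=r-\tfrac12$, then sum against $|a_n|$ to recover $\tF$. The only (immaterial) difference is that the paper further distributes $|\k|^{1/2}\lesssim n^{1/2}\prod_i|\h_i|^{1/2}$ over all convolution factors and bounds $\|A^{1/4}e^{\beta A^{1/2}}u\|_{\cW}$, whereas you leave the $n-1$ light factors unweighted and bound $\|e^{\beta A^{1/2}}u\|_{\cW}$ with an extra Poincar\'e step; both give the same constant $\Cwien[r]\|u\|_\beta$ and the same final estimate.
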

\begin{proof}
Observe that for $\h_1+\cdots +\h_n+\k=\0,\ \h_i, \k \in \tZ^d$, 
by triangle inequality and Proposition \ref{prop:elementary}, we have
\be  \label{elemineq}
|\k|^r \le n^r (|\h_1|^r + \cdots +|\h_n|^r).
\ee
Denote 
\[
I \subset \tZ^{d+1}, I=\{(\h_1,\cdots ,\h_n,\k):\h_1+\cdots +\h_n+\k =\0, \h_i, \k \in \tZ^d\}.
\]
Thus,
\ba
\lefteqn{|\l T u^n,A^re^{2\beta A^{1/2}}u\r|} \nn \\
& & \lesssim \sum_{I}|u(\h_1)|\cdots |u(\h_n)||u(\k)||\k|^{2r+1}e^{2\beta |\k|} \nn \\
& & \lesssim n^r \left(\sum_{I}|\h_1|^{r}e^{\beta |\h_1|}|u(\h_1)|\cdots 
e^{\beta |\h_n|}|u(\h_n)||u(\k)||\k|^{r+1}e^{\beta |\k|}\right.\nn \\
& & \left. \qquad \qquad \qquad  + \cdots + 
\sum_{I}e^{\beta |\h_1|}|u(\h_1)|\cdots |\h_n|^{r}e^{\beta |\h_n|}|u(\h_n)||u(\k)||\k|^{r+1}\e^{\beta |\k|}\right), \label{intmed1}
\ea
where to obtain \eqref{intmed1}, we used \eqref{elemineq} as well as the triangle inequality
$|\k| \le \sum_i |\h_i|$.
Because 
$\min\{|\h_1|, \cdots , |\h_n|, |\k|\} \ge 1$, we have
\[
|\k| \le \sum_i |\h_i| \le n |\h_1|\cdots |\h_n|,\ \mbox{which implies}\ 
|\k|^{\frac{1}{2}} \lesssim n^{1/2}|\h_1|^{\frac{1}{2}}  \cdots |\h_n|^{\frac12}.
\]
Consequently, from \eqref{intmed1}, we conclude

\ba
\lefteqn{|\l T u^n,A^re^{2\beta A^{1/2}}u\r|} \nn \\
& &\lesssim
n^{r+\frac12} \left(\sum_{I}|\h_1|^{r+\frac12}e^{\beta |\h_1|}|u(\h_1)|\cdots e^{\beta |\h_n|}|\h_n|^{\frac12}|u(\h_n)||u(\k)||\k|^{r+\frac12}e^{\beta |\k|}\right.\nn \\
& & \left. \qquad \qquad   + \cdots +
 \sum_{I}e^{\beta |\h_1|}|\h_1|^{\frac12}|u(\h_1)|\cdots
e^{\beta |\h_n|} |\h_n|^{r+\frac12}|u(\h_n)||u(\k)||\k|^{r+\frac12}e^{\beta |\k|}\right) \nn \\
& & \lesssim n^{r+\frac32}(\Cwien[r])^{n-1} \|A^{\frac14}u\|_\beta^2\|u\|_\beta^{n-1},  \label{intmed2}
\ea
where the last inequality follows exactly as in the proof of  \eqref{final}. This immediately  yields \eqref{ineq:nonlin}.

\end{proof}

 \begin{theorem}  \label{thm:analnonlin}
 Let $r >\frac{d+1}{2}$ and $\beta_0>0$ be fixed and $u_0$ be such that 
 $\|u_0\|_{\beta_0} < \infty $. Then,
 the complexified equation \eqref{eqn:analytic}
 admits a unique solution in the region 
 \bes  
 {\cal R}= \left\{z=se^{i\theta}: \theta \in 
 [0,2\pi), 0 < s < \frac{C\beta_0}{\tF(\|u_0\|_{\beta_0})}\right\}.
 \ees
 \end{theorem}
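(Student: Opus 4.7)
The plan is to adapt verbatim the scheme of Theorem \ref{thm:euler} and its SQG/Boussinesq/MHD analogues, replacing the bilinear Gevrey bound by the series estimate \eqref{ineq:nonlin}. Thus I would first introduce the Galerkin truncation $u_N = P_N u$, which solves a finite-dimensional ODE in $\C$ whose right-hand side is entire in $u_N$ (since $F$ is analytic and only finitely many Fourier modes are active on the Galerkin subspace). This yields a unique holomorphic $u_N$ in a neighborhood of $\zeta = 0$. The goal is then an $N$-uniform bound on all of $\cal R$, after which the Hilbert-space-valued version of Montel's theorem extracts a subsequential holomorphic limit satisfying the complexified \eqref{eqn:analytic}; uniqueness of the analytic extension follows as in Theorem~\ref{thm:euler}.

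Next, fix $\theta \in [0, 2\pi)$ and $\zeta = se^{i\theta}$, let $\delta > 0$ be chosen later, and introduce the moving Gevrey norm $\bn u(\zeta) \bn := \|u(\zeta)\|_{\beta_0 - \delta s}$. Pairing the complexified equation with $A^r e^{2(\beta_0 - \delta s) A^{1/2}} u$ in $H_\C$, multiplying by $e^{i\theta}$, taking real parts, and then invoking Proposition \ref{prop:analytic} on the right-hand side, gives
\[
\tfrac{1}{2}\ds \bn u \bn^2 + \delta \bn A^{1/4} u \bn^2 \le C\,\tF(\bn u \bn)\, \bn A^{1/4} u \bn^2,
\]
with $C$ the implicit constant in \eqref{ineq:nonlin}. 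I would then set $\delta := C\,\tF(\|u_0\|_{\beta_0})$. Since $\tF$ is nondecreasing on $[0,\infty)$ by \eqref{tfinc}, as long as $\bn u(\zeta) \bn \le \|u_0\|_{\beta_0}$ one has $C\,\tF(\bn u \bn) \le \delta$, so $\ds \bn u \bn^2 \le 0$. Starting from $\bn u(0) \bn = \|u_0\|_{\beta_0}$, the standard continuity/bootstrap argument used implicitly in the preceding theorems propagates the bound to all $s \in [0, \beta_0/\delta)$, uniformly in $N$ and in $\theta$, which is precisely the input required for Montel's theorem.

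The main obstacle has already been absorbed into Proposition~\ref{prop:analytic}: one must control each $n$-th term of the power series $F(u) = \sum_n a_n u^n$ in the moving Gevrey norm while leaving behind exactly two half-derivatives $A^{1/4}$ that can be absorbed by $\delta \bn A^{1/4} u \bn^2$. The $n$-th term bound $n^{r+3/2}(\Cwien[r])^{n-1}\|u\|_\beta^{n-1}$ sums precisely to $\tF(\|u\|_\beta)$, whose radius of convergence $R_M/\Cwien[r]$ dictates the admissible size of the initial data: the theorem is vacuous when $\|u_0\|_{\beta_0} \ge R_M/\Cwien[r]$, since in that case $\tF(\|u_0\|_{\beta_0}) = +\infty$ and $\cal R = \emptyset$. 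Once the estimate closes, the argument is otherwise identical in structure to the proofs of Theorem~\ref{thm:euler} and its inviscid analogues.
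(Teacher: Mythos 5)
Your proposal follows the paper's proof essentially verbatim: Galerkin truncation, the moving Gevrey norm $\bn u(\zeta)\bn = \|u(\zeta)\|_{\beta_0-\delta s}$, the differential inequality obtained from Proposition~\ref{prop:analytic}, the choice $\delta = C\,\tF(\|u_0\|_{\beta_0})$ combined with the monotonicity of $\tF$ from \eqref{tfinc} to show $\bn u \bn$ is non-increasing, and Montel's theorem to pass to the limit. The argument is correct; your added observation that the region is empty when $\|u_0\|_{\beta_0} \ge R_M/\Cwien[r]$ is a harmless clarification the paper leaves implicit.
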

 
 \begin{proof}
 Fix $\delta >0$, to be chosen later and, as before,  define the time-varying norm
\[
\bn u(\zeta)\bn = \|u(\zeta)\|_{\beta_0 - \delta s}.
\]
Recall that the corresponding (time-varying) inner product is denoted by $(( , ))$, i.e.,
\bas
\lefteqn{((u,v))}\\
 & &  =
 \l A^{r/2}e^{(\beta_0-\delta s)  A^{1/2}}u,
A^{r/2}e^{(\beta_0-\delta s) A^{1/2}}v\r \\
& & = \l u,
A^{r}e^{2(\beta_0-\delta s) A^{1/2}}v\r. 
\eas
Multiplying \eqref{eqn:analytic} by $e^{i\theta}$, taking the real part and then the inner-product with $A^re^{2(\beta_0-\delta s) A^{1/2}}u$, we readily obtain
\[
\frac12 \ds \bn u(\zeta)\bn^2 + \delta\, \bn A^{1/4}u(\zeta)\bn^2=
- ((\ Re(e^{i\theta}F(u(\zeta)) ) , u(\zeta)\ )), \ \zeta = s e^{i\theta}.
\]
Using Proposition \ref{prop:analytic}, we obtain
\be  \label{cxineq:analytic}
 \frac12 \ds \bn u\bn^2 + \delta\, \bn A^{1/4}u\bn^2
 \lesssim  \tF(\bn u \bn)\bn A^{1/4}u\bn^2.
 \ee
 
 Now choose 
 \[
 \delta = C \tF(\|\bu_0\|_{\beta_0}).
 \]
 From \eqref{cxineq:analytic}, and the fact that $\tF (\cdot)$ is strictly increasing 
 \eqref{tfinc}, 
we see that $\bn u \bn$ is non-increasing  and
\[
	\bn u(\zeta)\bn \le \|u_0\|_{\beta_0}\ \forall\ \zeta =se^{i\theta},\ 0 < s < \frac{\beta_0}{\delta}.
\]
In particular, this means
\[
\sup_{z \in {\cal R}}\|u(z)\| \le \|u_0\|_{\beta_0}.
\]
As before, the proof is now complete by invoking Montel's theorem.

\end{proof}

\begin{rem}
{\em
One can extend the method of this section to handle a nonlinearity of the form 
\[
F(u)=T_0G(T_1u,\cdots , T_nu), 
\]
where $G$ is an analytic function of $n$-variables and $T_i$ are Fourier multipliers with
symbol $m_i$ satisfying
\[
|m_i(\k)| \lesssim |\k|^{\alpha_i}\ \forall\ \k \in \tZ^d, 0\le i \le n, \sum_{i=0}^n \alpha_i \le 1.
\]
Using the exact same technique, one can in fact also consider the case of systems, in which case Theorem \ref{thm:euler} becomes a special case.
}

\end{rem}

\section*{Acknowledgement}
This research was partially supported by the NSF grant DMS-1517027 and the CNMS start-up fund of the University of Maryland, Baltimore County.

\end{document}